%
%
%

\documentclass[preprint,11pt]{elsarticle}
\usepackage[utf8]{inputenc}

\journal{DGA}

\usepackage{hyperref}


\usepackage{ifpdf}


\usepackage[dvipsnames]{xcolor}

\usepackage{amsmath,amsthm,amssymb}
\usepackage{booktabs}
\usepackage[bf]{caption}

\usepackage{eucal}

\usepackage{relsize}




\usepackage{boxedminipage}

\usepackage{listings}




\usepackage[all]{xy}



\newtheorem{theorem}{Theorem}[section]
\newtheorem{lemma}{Lemma}[section]
\newtheorem{proposition}{Proposition}[section]
\newtheorem{corollary}{Corollary}[section]

\theoremstyle{definition}

\newtheorem{remark}{Remark}[section]
\newtheorem{example}{Example}[section]
\newtheorem{counterexample}[example]{Counter-example}



%



\DeclareMathOperator{\real}{Re}

\DeclareMathOperator{\grad}{grad}
\DeclareMathOperator{\sgrad}{sgrad}
\DeclareMathOperator{\divv}{div}
\DeclareMathOperator{\curl}{curl}

\newcommand{\e}{\mathrm{e}}
\renewcommand{\i}{\mathrm{i}}
\newcommand{\pair}[1]{\langle #1 \rangle}
\newcommand{\inner}[1]{\langle\!\langle #1 \rangle\!\rangle}

\providecommand{\abs}[1]{\lvert#1\rvert}

\newcommand{\trans}{\top}
\DeclareMathOperator{\image}{im}

\newcommand{\ud}{\,\mathrm{d}}
\newcommand{\dd}{\mathrm{d}}
\newcommand{\pd}{\partial}

\newcommand{\R}{{\mathbb R}}
\newcommand{\C}{{\mathbb C}}

\newcommand{\disk}{\mathbb{D}}


\newcommand{\Fcal}{\mathcal{F}}
\newcommand{\LieD}{\pounds}
\newcommand{\interior}{\mathrm{i}}

\newcommand{\Diff}{\mathrm{Diff}}
\newcommand{\Emb}{\mathrm{Emb}}
\newcommand{\Xcal}{\mathfrak{X}}

\newcommand{\vol}{\mathrm{vol}}

\newcommand{\Xcalvol}{{\Xcal_\vol}}

\newcommand{\con}{\mathrm{con}}

\newcommand{\Xcalcon}{{\Xcal_\con}}
\newcommand{\Con}{\mathrm{Con}}

\newcommand{\iso}{\mathrm{iso}}

\newcommand{\hol}{\mathrm{hol}}


\newcommand{\Acal}{\mathcal{A}}
\newcommand{\U}{\mathsf{U}}
\newcommand{\Id}{\mathsf{Id}}

\newcommand{\OLD}[1]{}


\begin{document}

\begin{frontmatter}


\title{On Hodge decomposition and conformal variational problems\tnoteref{titlelabel}}
\tnotetext[titlelabel]{This research was supported by the Marsden Fund.}

\author[seat]{Stephen Marsland}
\author[ifs]{Robert McLachlan}
\author[ifs]{Klas Modin\corref{cor1}\fnref{modinlabel}}
\ead{K.E.F.Modin@massey.ac.nz}
\fntext[modinlabel]{Supported in part by the Royal
Physiographic Society in Lund, Hellmuth Hertz' foundation grant.}
\cortext[cor1]{Corresponding author.}
\author[ifs]{Matthew Perlmutter}


\address[seat]{School of Engineering and Advanced Technology,
Massey University \\
Private Bag 11 222, Palmerston North 4442, New Zealand}
\address[ifs]{Institute of Fundamental Sciences,
Massey University \\
Private Bag 11 222, Palmerston North 4442, New Zealand}





\begin{abstract}
	
	The main result is the identification of the orthogonal complement
	of the subalgebra of conformal vector field inside the algebra of 
	all vector fields of a compact flat 2--manifold.
	As a fundamental tool, the complete Hodge
	decomposition for manifold with boundary is used.
	The identification allows the derivation of
	governing differential equations for variational
	problems on the space of conformal vector fields.
	Several examples are given. 
	In addition, the paper also gives a
	review, in full detail, of already known vector field decompositions
	involving subalgebras of volume preserving and symplectic vector fields.
\end{abstract}

\begin{keyword}
Conformal vector fields \sep vector field subalgebras \sep Hodge decomposition \sep Friedrichs decomposition
\sep variational problems \sep Euler equations
\MSC[2010] 58E30 \sep 53D25
\end{keyword}

\end{frontmatter}

%
%
%
%
%
%


\section{Introduction}

It is well known that the Euler equations of fluid dynamics correspond
to a Lie--Poisson equation on the infinite dimensional Lie algebra of
volume preserving vector fields~\cite{Ar1966,ArKh1998}.
On domains of~$\R^2$ or~$\R^3$ the derivation of the strong form of the Euler equations
starts from a variational principle, and relies on the Helmholtz
decomposition in order to identify the $L^2$~orthogonal complement
of the space of volume preserving vector fields inside the space
of all vector fields. A generalisation to arbitrary compact Riemannian manifolds,
with or without boundary, is obtained by identifying vector fields
with 1--forms (by contraction with the metric), and then use
the Hodge decomposition for manifolds with boundary in order to identifying the
$L^2$~orthogonal complement~\cite{EbMa1970}.

In this paper we give a framework for how various Lie subalgebras
of vector fields can be identified with one or several components in the
Hodge decomposition by suitable isometric isomorphisms between vector fields
and 1--forms, thus identifying the $L^2$~orthogonal complement of the subalgebra. 
The first example concerns Lie subalgebras
of volume preserving vector fields. This example is well known
in the literature, although it is hard to find a detailed exposition
for the cases that occur for a manifold with boundary. Thus,
the first aim of the paper is to give such an exposition.
The second example concerns Lie subalgebras of symplectic and Hamiltonian vector fields.
It is known that the space of symplectic vector fields can by identified
with closed 1--forms (by contraction with the symplectic form).
The second aim of the paper is to give a detailed exposition
of various $L^2$~orthogonal decompositions involving symplectic and Hamiltonian vector fields,
on a compact almost Kähler
manifold with boundary. 
%
%
%
%
The third example is new and constitutes the main contribution of the paper.
We show how the subalgebra of conformal vector fields on a flat 2--manifold can be
identified with the space of harmonic fields. As an application, we then give examples
of how to derive partial differential equations from variational problems on the space
of conformal vector fields. In the main example, which is also the original motivation for this
paper, we derive a geodesic equation on the infinite dimensional manifold of planar conformal
embeddings of the unit disk.

The paper is organised as follows. In Section~\ref{sec:lie_algebras_of_vector_fields}
we give a detailed review of the three types of vector field subalgebras.
We work in the category of Fréchet-Lie algebras, and we give proofs that
the subalgebras considered are proper Fréchet-Lie subalgebras. The results in this
section does not require the underlying manifold to be compact.

In Section~\ref{sec:review_of_the_hodge_decomposition} we first review
the Hodge decomposition for manifolds with boundary. 
Thereafter, we use the standard Hodge decomposition for manifolds with
boundary in combination with the so called Friedrich decomposition to obtain
a complete Hodge decomposition, involving six spaces,
and we show that these spaces can be characterised in terms of kernels
and images of the differential and co-differential.
The special case of 2--manifolds is studied in further detail, and
the unit disk, the standard annulus, the torus, and the sphere
are given as examples.

In Section~\ref{sec:orthogonal_decomposition_of_vector_fields} we
use the Hodge decomposition to obtain various $L^2$~orthogonal decompositions
of vector fields. Altogether, we derive nine different decompositions,
which are summarised in Table~\ref{tab:decompositions}.
The main result is Theorem~\ref{thm:conformal_decomposition},
which gives a decomposition of vector fields on a flat 2--manifold,
involving conformal vector fields as one of the components in the Hodge decomposition.
The section ends with various examples and one counter-example.

In Section~\ref{sec:application_examples} we work out the governing
differential equations for three variational problems on the space
of conformal vector fields on a simply connected bounded domain of~$\R^2$.

%

\section{Lie algebras of vector fields} 
\label{sec:lie_algebras_of_vector_fields}

In this section, let~$M$ be an $n$-manifold, possibly with
boundary, such that~$M$ is either compact, or can be equipped with a
countable sequence of compact sets $K_{i}\subset M$ such that each compact
subset $U\subset M$ is contained in one of~$K_{i}$.
In this case, the linear space $\Fcal(M)$~of smooth real valued function 
on~$M$ can be equipped with a sequence of semi-norms, making it
a Fréchet space (see e.g.~\cite{Ha1982} for details). In turn,
this induces a Fréchet topology on the space $\mathcal{T}_{r}^{s}(M)$
of smooth tensor fields on~$M$ of finite order.
In particular, the space~$\Xcal(M)$ of vector fields, and
the space $\Omega^{k}(M)$ of $k$--forms are Fréchet spaces.
Furthermore, the topologies
are ``compatible'' with each other, in the sense that
any partial differential operator with smooth coefficients
between any two spaces of tensor fields
is a smooth map~\cite[Sect.~II.2.2]{Ha1982}.
%
%
In particular, the Lie derivative map
\begin{equation*}
	\Xcal(M)\times\mathcal{T}_{r}^{s}(M)\ni (\xi,t)\mapsto \LieD_{\xi}t \in\mathcal{T}_{r}^{s}(M)
\end{equation*}
is smooth, which in turn implies that~$\Xcal(M)$ is a Fréchet-Lie algebra
with Lie bracket given by $[\xi,\eta] = - \LieD_{\xi}\eta$
(this bracket fulfils the Jacobi identity).

Recall that a subspace of a Fréchet-Lie algebra is called a Fréchet-Lie subalgebra
if it is topologically closed, and also closed under the Lie bracket.
In the case when $M$ has a boundary, 
it holds that the subspace $\Xcal_{\mathsf{t}}(M)$ of vector fields that are tangential
to the boundary is a Fréchet-Lie subalgebra.
Basically, this is the only subalgebra which can be obtained
intrinsically, without introducing any further structures on~$M$.
In the remainder of this section we review some other well known subalgebras of vector fields
which require extra structure on the manifold.

\subsection{Volume preserving vector fields} 
\label{sub:volume_preserving_vector_fields}

Assume that $M$ is orientable, and let $M$ be
equipped with a volume form, denoted~$\vol$. The set of
volume preserving vector fields is then given by
$\Xcalvol(M) = \{ \xi\in\Xcal(M); \LieD_{\xi}\vol = 0 \}$.
It is clear that this is a linear subspace of $\Xcal(M)$.
Recall that the divergence with respect to~$\vol$ is the partial differential
operator $\divv:\Xcal(M)\to\Fcal(M)$ defined by
$\LieD_{\xi}\vol = \divv(\xi)\vol$ for all $\xi\in\Xcal(M)$.
Thus, since the volume form is strictly non-zero, it holds that
$\xi\in\Xcalvol(M)$ if and only if $\divv(\xi) = 0$.
\begin{proposition}\label{pro:volume_algebra}
	$\Xcalvol(M)$ is a Fréchet-Lie subalgebra of $\Xcal(M)$.
\end{proposition}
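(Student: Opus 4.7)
The plan is to verify the two defining conditions of a Fréchet-Lie subalgebra separately: that $\Xcalvol(M)$ is topologically closed in $\Xcal(M)$, and that it is closed under the Lie bracket. Both facts will follow from general properties of the Lie derivative already noted in the preceding text, with essentially no analysis beyond continuity of differential operators and the standard bracket-Lie-derivative identity.

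For topological closure, the idea is to exhibit $\Xcalvol(M)$ as the zero-set of a continuous linear map. Since $\divv:\Xcal(M)\to\Fcal(M)$ is a linear partial differential operator with smooth coefficients, the general fact cited above (that any such operator between spaces of smooth tensor fields is a smooth, hence continuous, map between the corresponding Fréchet spaces) applies. Therefore
\begin{equation*}
	\Xcalvol(M) = \divv^{-1}(\{0\})
\end{equation*}
is the preimage of a closed set under a continuous map, and so is closed in $\Xcal(M)$.

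For bracket closure, I would invoke the standard identity $\LieD_{\LieD_\xi\eta} = [\LieD_\xi,\LieD_\eta]$ as endomorphisms of $\mathcal{T}^{s}_{r}(M)$, combined with the paper's sign convention $[\xi,\eta] = -\LieD_\xi\eta$ to get $\LieD_{[\xi,\eta]} = -[\LieD_\xi,\LieD_\eta]$. Taking $\xi,\eta\in\Xcalvol(M)$, both $\LieD_\xi\vol$ and $\LieD_\eta\vol$ vanish, so
\begin{equation*}
	\LieD_{[\xi,\eta]}\vol = -\LieD_\xi(\LieD_\eta\vol) + \LieD_\eta(\LieD_\xi\vol) = 0,
\end{equation*}
which gives $[\xi,\eta]\in\Xcalvol(M)$.

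There is no real obstacle: the proof is essentially a direct application of two well-known facts. The only point requiring a moment of care is the sign in the bracket-Lie-derivative identity, since the paper uses the non-standard convention $[\xi,\eta] = -\LieD_\xi\eta$; fortunately the conclusion is insensitive to this sign because we are testing against a single tensor field $\vol$ and both terms vanish individually. The template of this proof will be reused verbatim for the symplectic and conformal subalgebras in the following subsections, with $\vol$ replaced by the symplectic form or the conformal class of the metric, so it is worth phrasing it in a way that emphasises the two structural inputs (continuity of the constraint operator and the Lie-derivative commutator identity).
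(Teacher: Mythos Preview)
Your proof is correct and matches the paper's argument essentially line for line: topological closure via continuity of the divergence operator and preimage of $\{0\}$, bracket closure via the commutator identity $\LieD_{[\xi,\eta]}\vol = \LieD_\eta\LieD_\xi\vol - \LieD_\xi\LieD_\eta\vol = 0$. The only difference is cosmetic (your extra remarks on the sign convention and on reusing the template), and as you note the sign is irrelevant here since both terms vanish separately.
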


\begin{proof}
	The differential operator $\divv:\Xcal(M)\to\Fcal(M)$ is smooth
	in the Fréchet topology. In particular, it is continuous, so the preimage
	of the closed set $\{0\}\in\Fcal(M)$, which is equal to $\Xcalvol(M)$,
	is also closed. Thus, $\Xcalvol(M)$ is a topologically closed subspace of~$\Xcal(M)$.
	
	Next, let $\xi,\eta\in\Xcalvol(M)$. Then
	\begin{equation*}
		\LieD_{[\xi,\eta]}\vol = \LieD_{\LieD_{\eta}\xi}\vol
		= \LieD_{\eta}\underbrace{\LieD_{\xi}\vol}_{0} - \LieD_{\xi}\underbrace{\LieD_{\eta}\vol}_{0}
		= 0 .
	\end{equation*}
	Thus, $\Xcalvol(M)$ is closed under the Lie bracket, which finishes the proof.
\end{proof}

In the case when $M$ has a boundary, it also holds that
the subspace $\Xcal_{\vol,\mathsf{t}}(M) := \Xcalvol(M)\cap\Xcal_{\mathsf{t}}(M)$
is a Fréchet-Lie subalgebra. This follows immediately since both $\Xcalvol(M)$
and $\Xcal_\mathsf{t}(M)$ are Fréchet-Lie subalgebras.

Next, consider the subspace of \emph{exact} divergence free vector fields given by
\begin{equation*}
	\Xcal^\mathrm{ex}_\vol(M) = \{ \xi\in\Xcal(M); \interior_\xi\vol\in\dd\Omega^{n-2}(M) \}.
\end{equation*}
The following result is well known (see e.g.~\cite{ArKh1998}).

\begin{proposition}\label{pro:exact_volume_algebra}
	$\Xcal_\vol^\mathrm{ex}(M)$ is a Fréchet-Lie subalgebra of $\Xcal(M)$
	and an ideal in $\Xcalvol(M)$.
\end{proposition}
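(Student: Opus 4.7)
The plan is to reduce the subalgebra property to the ideal property via the inclusion $\Xcal_\vol^\mathrm{ex}(M) \subseteq \Xcalvol(M)$, then handle topological closedness separately.

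First I would establish that every exact divergence-free field is divergence-free: if $\interior_\xi\vol = d\alpha$, then by Cartan's magic formula $\LieD_\xi\vol = d\,\interior_\xi\vol + \interior_\xi d\vol = d^2\alpha = 0$, so $\xi\in\Xcalvol(M)$. Given this, closure of $\Xcal_\vol^\mathrm{ex}(M)$ under the Lie bracket is an immediate corollary of its being an ideal in $\Xcalvol(M)$, so only the ideal property and topological closedness need separate arguments. Linearity of $\Xcal_\vol^\mathrm{ex}(M)$ is routine from the linearity of $d$ and $\interior$.

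Next, for topological closedness, the contraction map $\xi\mapsto\interior_\xi\vol$ is a continuous linear bijection $\Xcal(M)\to\Omega^{n-1}(M)$ in the Fréchet topology (since $\vol$ is nowhere vanishing), so $\Xcal_\vol^\mathrm{ex}(M)$ is identified with the preimage of $d\Omega^{n-2}(M)$. It therefore suffices to show that $d\Omega^{n-2}(M)$ is a closed subspace of $\Omega^{n-1}(M)$. This is the main obstacle: for compact $M$, with or without boundary, it follows from the closed-range property of $d$, which is a standard consequence of elliptic Hodge theory and is essentially what the Friedrichs decomposition reviewed later in the paper will supply; in the more general $\sigma$-compact setting hypothesised at the start of the section one would appeal to the analogous statements in~\cite{ArKh1998}.

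Finally, for the ideal property, I would take $\xi\in\Xcalvol(M)$ and $\eta\in\Xcal_\vol^\mathrm{ex}(M)$ with $\interior_\eta\vol=d\alpha$, and apply the identity $\interior_{[\xi,\eta]} = \interior_\eta\LieD_\xi - \LieD_\xi\interior_\eta$ (which is the Cartan commutator $[\LieD_\xi,\interior_\eta]=\interior_{\LieD_\xi\eta}$ rewritten under the paper's sign convention $[\xi,\eta]=-\LieD_\xi\eta$) to the volume form $\vol$. The first term vanishes because $\xi$ is volume preserving, and the second, using $[d,\LieD_\xi]=0$, reduces to $-d(\LieD_\xi\alpha)$, which is exact. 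Hence $[\xi,\eta]\in\Xcal_\vol^\mathrm{ex}(M)$, establishing the ideal property and, in combination with the inclusion above, the Lie subalgebra property.
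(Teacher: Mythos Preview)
Your proposal is correct and follows essentially the same route as the paper: the inclusion $\Xcal_\vol^{\mathrm{ex}}(M)\subset\Xcalvol(M)$ via Cartan's formula, and the ideal property via the commutator identity $[\LieD_\eta,\interior_\xi]=\interior_{\LieD_\eta\xi}$ together with $[\dd,\LieD_\eta]=0$. The only notable difference is that you are more careful than the paper about topological closedness: the paper simply asserts it from smoothness of $\interior_{(\cdot)}\vol$ and $\dd$, whereas you correctly isolate the nontrivial point that $\dd\Omega^{n-2}(M)$ must be a closed subspace of $\Omega^{n-1}(M)$, which in the compact case does indeed rely on Hodge theory.
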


\begin{proof}
	Topological closeness follows since the maps 
	$ \Omega^{n-1}(M)\ni\interior_\xi\vol \mapsto \xi \in \Xcal(M)$
	and $\dd:\Omega^{n-2}(M)\to\Omega^{n-1}(M)$
	are smooth in the Fréchet topology. Next, if $\xi\in\Xcal_\vol^\mathrm{ex}(M)$
	then $\xi$ is divergence free since 
	$\LieD_\xi\vol = \dd\interior_\xi\vol = 0$. Finally, if $\eta\in\Xcalvol(M)$
	then $\interior_{\LieD_\eta\xi}\vol = 
	\LieD_\eta \interior_\xi\vol = \LieD_\eta\dd\alpha = \dd\LieD_\eta\alpha \in \dd\Omega^{n-2}(M)$,
	so $\Xcal_\vol^\mathrm{ex}(M)$ is an ideal in $\Xcalvol(M)$.
\end{proof}

Continuing as before, we also obtain the smaller Fréchet-Lie subalgebra of tangential
exact divergence free vector fields, by 
\begin{equation*}
	\Xcal_{\vol,\mathsf{t}}^\mathrm{ex}(M) = \Xcal_\vol^\mathrm{ex}(M)\cap\Xcal_\mathsf{t}(M).
\end{equation*}

The space of volume preserving vector fields is of importance in
fluid mechanics. In particular, the motion of an incompressible
ideal fluid is described by a differential equation evolving
on the phase space $\Xcal_{\vol,\mathsf{t}}(M)$,
which is the Lie algebra of the infinite dimensional Lie group of volume preserving diffeomorphisms of~$M$~\cite{Ar1966}.


\subsection{Symplectic vector fields} 
\label{sub:symplectic_vector_fields}

Let $M$ be equipped with a symplectic structure, i.e., a
closed non-degenerate 2--form $\omega$.
%
Then the subspace of symplectic vector fields on~$M$ is given by 
$\Xcal_{\omega}(M) = \{ \xi\in\Xcal(M); \LieD_{\xi}\omega = 0 \}$.

\begin{proposition}\label{pro:symplectic_vf}
	$\Xcal_{\omega}(M)$ is a Fréchet-Lie subalgebra of $\Xcal(M)$.
\end{proposition}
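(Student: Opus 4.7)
The plan is to mirror the structure of the proof of Proposition~\ref{pro:volume_algebra} exactly, replacing the volume form by the symplectic form.

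First, I would establish topological closedness. The assignment $\xi \mapsto \LieD_\xi\omega$ is a first order partial differential operator with smooth coefficients from $\Xcal(M)$ into $\Omega^2(M)$, and by the remarks at the beginning of Section~\ref{sec:lie_algebras_of_vector_fields} every such operator is smooth (in particular continuous) with respect to the Fréchet topologies. Since $\Xcal_\omega(M)$ is the preimage of the closed singleton $\{0\}\subset\Omega^2(M)$, it is a closed linear subspace of $\Xcal(M)$.

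Next, I would verify closure under the Lie bracket $[\xi,\eta]=-\LieD_\xi\eta=\LieD_\eta\xi$. For $\xi,\eta\in\Xcal_\omega(M)$, using the standard commutator identity $\LieD_{\LieD_\eta\xi} = \LieD_\eta\LieD_\xi - \LieD_\xi\LieD_\eta$ on tensor fields, one computes
\begin{equation*}
    \LieD_{[\xi,\eta]}\omega = \LieD_{\LieD_\eta\xi}\omega
    = \LieD_\eta\underbrace{\LieD_\xi\omega}_{0} - \LieD_\xi\underbrace{\LieD_\eta\omega}_{0} = 0,
\end{equation*}
so $[\xi,\eta]\in\Xcal_\omega(M)$. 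Together with the closedness in the Fréchet topology this shows that $\Xcal_\omega(M)$ is a Fréchet-Lie subalgebra.

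I do not expect any genuine obstacle: the proof is a verbatim transcription of Proposition~\ref{pro:volume_algebra} with $\vol\in\Omega^n(M)$ replaced by $\omega\in\Omega^2(M)$, because the only properties of the invariant tensor that were used are (i) the smoothness of the Lie derivative map in that tensor and (ii) that the Lie derivative commutator identity holds on arbitrary tensor fields. Neither property depends on non-degeneracy or closedness of $\omega$, so in fact closedness of the form is not needed for this particular statement. The only minor point to be careful about is the bracket sign convention, which has already been fixed in the introduction of this section.
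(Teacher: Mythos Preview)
Your proposal is correct and matches the paper's proof essentially verbatim: the paper also shows closedness via continuity of $\xi\mapsto\LieD_\xi\omega$ and closure under the bracket via the same commutator identity for Lie derivatives. Your additional remarks about the sign convention and the irrelevance of non-degeneracy/closedness of $\omega$ are accurate but not needed for the argument.
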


\begin{proof}
	The map $\Xcal(M) \ni \xi \mapsto \LieD_{\xi}\omega \in \Omega^{2}(M)$
	is smooth, so its preimage of $\{0\}\in\Omega^{2}(M)$ is topologically closed.
	Thus, $\Xcal_{\omega}(M)$ is topologically closed in $\Xcal(M)$.
	Further, if $\xi,\eta\in\Xcal_{\omega}(M)$, then
	\begin{equation*}
		\LieD_{[\xi,\eta]}\omega = \LieD_{\eta}\underbrace{\LieD_{\xi}\omega}_{0}-
		\LieD_{\xi}\underbrace{\LieD_{\eta}	\omega}_{0} = 0 .
	\end{equation*}
	Thus, $\Xcal_{\omega}(M)$ is closed under bracket, which concludes the proof.
\end{proof}

The space of Hamiltonian vector fields are those who have
a globally defined Hamiltonian. That is,
\begin{equation*}
	\Xcal_\mathrm{Ham}(M) = \{ \xi\in\Xcal(M); \interior_\xi\omega \in \dd\Omega^0(M) \}.
\end{equation*}
With the same proof as for Proposition~\ref{pro:exact_volume_algebra},
but replacing $\vol$ with $\omega$, we get the following result.

\begin{proposition}\label{pro:exact_symplectic_algebra}
	$\Xcal_\mathrm{Ham}(M)$ is a Fréchet-Lie subalgebra of $\Xcal(M)$
	and an ideal in $\Xcal_\omega(M)$.
\end{proposition}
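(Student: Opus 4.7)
The plan is to mimic the proof of Proposition~\ref{pro:exact_volume_algebra} almost verbatim, with the volume form~$\vol$ replaced by the symplectic form~$\omega$. The only structural differences are that $\omega$ lives in degree~$2$ rather than top degree, so its contraction with a vector field is a $1$--form, and the fact $d\omega=0$ now has to be invoked explicitly (whereas for the volume form this was automatic by degree count).

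For topological closedness, I would argue exactly as in Proposition~\ref{pro:exact_volume_algebra}: the contraction map $\Xcal(M)\ni \xi\mapsto\interior_\xi\omega\in\Omega^1(M)$ is a smooth linear operator in the Fréchet topology, and so is the exterior derivative $\dd\colon\Omega^0(M)\to\Omega^1(M)$; hence $\dd\Omega^0(M)$ is closed in~$\Omega^1(M)$, and $\Xcal_\mathrm{Ham}(M)$ is the preimage of this closed set under a continuous map.

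For the subalgebra property, if $\xi\in\Xcal_\mathrm{Ham}(M)$ with $\interior_\xi\omega=\dd f$, then Cartan's magic formula together with $\dd\omega=0$ gives
\begin{equation*}
    \LieD_\xi\omega = \dd\interior_\xi\omega + \interior_\xi\dd\omega = \dd\dd f = 0,
\end{equation*}
so $\Xcal_\mathrm{Ham}(M)\subset\Xcal_\omega(M)$. For closure under the bracket (and simultaneously the ideal property), take $\xi\in\Xcal_\mathrm{Ham}(M)$ with $\interior_\xi\omega=\dd f$ and $\eta\in\Xcal_\omega(M)$. Using $[\xi,\eta]=\LieD_\eta\xi$ and the identity $\interior_{\LieD_\eta\xi} = [\LieD_\eta,\interior_\xi]$, one computes
\begin{equation*}
    \interior_{[\xi,\eta]}\omega = \LieD_\eta\interior_\xi\omega - \interior_\xi\LieD_\eta\omega = \LieD_\eta\dd f - 0 = \dd\LieD_\eta f \in \dd\Omega^0(M),
\end{equation*}
which establishes both that $\Xcal_\mathrm{Ham}(M)$ is closed under bracket (specializing to $\eta\in\Xcal_\mathrm{Ham}(M)$, which is already inside $\Xcal_\omega(M)$ by the previous step) and that it is an ideal in~$\Xcal_\omega(M)$.

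No real obstacle is expected; the only sign/convention issue to watch is that the paper defines $[\xi,\eta]=-\LieD_\xi\eta=\LieD_\eta\xi$, which is precisely what makes the commutator identity $\interior_{\LieD_\eta\xi}=\LieD_\eta\interior_\xi-\interior_\xi\LieD_\eta$ line up with the Hamiltonian condition as above. The proof of Proposition~\ref{pro:exact_volume_algebra} works line for line after these substitutions.
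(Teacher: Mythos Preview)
Your proposal is correct and is exactly what the paper does: the paper's entire proof is the single sentence ``With the same proof as for Proposition~\ref{pro:exact_volume_algebra}, but replacing $\vol$ with $\omega$,'' and you have carried out precisely that substitution, correctly noting where $\dd\omega=0$ enters. The computation $\interior_{[\xi,\eta]}\omega=\LieD_\eta\interior_\xi\omega-\interior_\xi\LieD_\eta\omega=\dd\LieD_\eta f$ matches the paper's line $\interior_{\LieD_\eta\xi}\vol=\LieD_\eta\interior_\xi\vol=\LieD_\eta\dd\alpha=\dd\LieD_\eta\alpha$ verbatim after the replacement.
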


Just as in the volume preserving case, we also have the
smaller Fréchet-Lie subalgebras of symplectic and Hamiltonian tangential vector fields,
\begin{equation*}
	\Xcal_{\omega,\mathsf{t}}(M) = \Xcal_\omega(M)\cap\Xcal_\mathsf{t}(M)
	\qquad\text{and}\qquad
	\Xcal_{\mathrm{Ham},\mathsf{t}}(M) = \Xcal_\mathrm{Ham}(M)\cap\Xcal_\mathsf{t}(M) .
\end{equation*}



\subsection{Conformal vector fields} 
\label{sub:conformal_vector_fields}


Let $M$ be equipped with a Riemannian metric~$\mathsf{g}$.
%
Then the subspace of conformal vector fields is given by
\[
	\Xcal_\con(M) = \{ \xi\in\Xcal(M); 
	\LieD_{\xi}\mathsf{g} = F\mathsf{g}, F\in\Fcal(M) \}.
\] 
Thus, if $\xi\in\Xcalcon(M)$
then $\xi$ preserves the metric up to scaling by a function. In turn,
this implies that the infinitesimal transformation generated
by $\xi$ preserve angles. 
%
%
%
Indeed, if $\eta,\psi\in\Xcal(M)$
are everywhere orthogonal, i.e., $\interior_{\eta}\interior_{\psi}\mathsf{g} = \mathsf{g}(\eta,\psi)=0$, then
$\mathsf{g}(\LieD_{\xi}\eta,\psi) + \mathsf{g}(\eta,\LieD_{\xi}\psi)=0$, which follows
since 
\[
0 = \LieD_{\xi}(\mathsf{g}(\eta,\psi)) = F\mathsf{g}(\eta,\psi)
+ \mathsf{g}(\LieD_{\xi}\eta,\psi) + \mathsf{g}(\eta,\LieD_{\xi}\psi) =
\mathsf{g}(\LieD_{\xi}\eta,\psi) + \mathsf{g}(\eta,\LieD_{\xi}\psi).
\]

%

\begin{proposition}\label{pro:conformal_vector_fields_algebra}
	$\Xcal_\con(M)$ is a Fréchet-Lie subalgebra of $\Xcal(M)$.
\end{proposition}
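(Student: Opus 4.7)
The plan is to adapt the two-step template used in the proofs of Propositions~\ref{pro:volume_algebra} and~\ref{pro:symplectic_vf}: first establish that $\Xcal_\con(M)$ is topologically closed in $\Xcal(M)$ by exhibiting it as the preimage of $\{0\}$ under a continuous linear map, and then verify closure under the Lie bracket by a short computation using $[\xi,\eta] = \LieD_\eta\xi$ together with the commutator identity $\LieD_{\LieD_\eta\xi} = \LieD_\eta\LieD_\xi - \LieD_\xi\LieD_\eta$ on tensor fields.

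The main obstacle relative to the volume-preserving and symplectic cases is that the defining condition $\LieD_\xi\mathsf{g} = F\mathsf{g}$ involves an \emph{existential} quantifier on $F \in \Fcal(M)$, so one cannot directly use $\xi\mapsto \LieD_\xi\mathsf{g}$ as the continuous map into a fixed target. To eliminate the quantifier I would use the pointwise $\mathsf{g}$-orthogonal splitting of a symmetric $(0,2)$-tensor into its pure-trace and trace-free parts: a symmetric tensor $h$ is a (pointwise) scalar multiple of $\mathsf{g}$ if and only if the trace-free part $h - \tfrac{1}{n}(\tr_\mathsf{g} h)\,\mathsf{g}$ vanishes, where $n = \dim M$. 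Applying this with $h = \LieD_\xi\mathsf{g}$ yields
\begin{equation*}
    \Xcal_\con(M) = T^{-1}(\{0\}), \qquad
    T(\xi) := \LieD_\xi\mathsf{g} - \tfrac{1}{n}\,\tr_\mathsf{g}(\LieD_\xi\mathsf{g})\,\mathsf{g}.
\end{equation*}
Since $T$ is a first-order linear partial differential operator with smooth coefficients, it is smooth, hence continuous, in the Fréchet topology on tensor fields, so its zero set is closed in $\Xcal(M)$.

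For the bracket closure, let $\xi,\eta\in\Xcal_\con(M)$ with $\LieD_\xi\mathsf{g} = F\mathsf{g}$ and $\LieD_\eta\mathsf{g} = G\mathsf{g}$. Then a direct expansion gives
\begin{equation*}
    \LieD_{[\xi,\eta]}\mathsf{g} = \LieD_\eta(F\mathsf{g}) - \LieD_\xi(G\mathsf{g}) = (\LieD_\eta F - \LieD_\xi G)\,\mathsf{g},
\end{equation*}
the two cross terms proportional to $FG\,\mathsf{g}$ cancelling. Since $\LieD_\eta F - \LieD_\xi G \in \Fcal(M)$, we conclude $[\xi,\eta]\in\Xcal_\con(M)$, completing the subalgebra property. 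No structural input from the theory of conformal groups is needed at this stage; the entire argument reduces to the trace-free reformulation of the conformal condition plus the same bracket identity used in the earlier propositions.
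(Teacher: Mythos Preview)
Your proof is correct and essentially follows the paper's approach. The bracket-closure computation is identical to the paper's. For topological closedness the paper instead takes $\Phi(\xi)=\LieD_\xi\mathsf{g}$ and observes that $\Xcal_\con(M)=\Phi^{-1}(\Fcal(M)\mathsf{g})$ with $\Fcal(M)\mathsf{g}$ closed in $\mathcal{T}^0_2(M)$; your trace-free map $T$ is simply $\Phi$ composed with the continuous projection onto trace-free symmetric tensors, so the two arguments are equivalent, with yours making explicit (via $T^{-1}(\{0\})$) why that subspace is closed.
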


\begin{proof}
	We need to show that $\Xcal_\con(M)$ is closed 
	under the Lie bracket 
	and that $\Xcal_\con(M)$ is topologically closed in $\Xcal(M)$.
	Let $\xi,\eta\in\Xcal_\con(M)$. Then
	\[
		\LieD_{\LieD_{\xi}\eta} \mathsf{g} = \LieD_{\xi}\LieD_{\eta}\mathsf{g}-
		\LieD_{\eta}\LieD_{\xi}\mathsf{g}
		= \LieD_{\xi}(G \mathsf{g}) - \LieD_{\eta}(F \mathsf{g})
		= (\LieD_{\xi}G-\LieD_{\eta}F)\mathsf{g},
	\]
	which proves that $[\xi,\eta] = -\LieD_{\xi}\eta\in\Xcal_\con(M)$.
	
	To prove that $\Xcal_\con(M)$ is topologically closed in $\Xcal(M)$,
	we define a map $\Phi:\Xcal(M)\to \mathcal{T}^0_2(M)$ by
	$\xi\mapsto \LieD_{\xi}\mathsf{g}$. This is a smooth
	map in the Fréchet topology.
	We notice that $\Xcal_\con(M)=\Phi^{-1}(\Fcal(M)\mathsf{g})$. 
	Since $\Fcal(M)\mathsf{g}$ is topologically closed in $\mathcal{T}_2^0(M)$
	it follows from continuity of~$\Phi$ that its preimage, i.e., $\Xcal_\con(M)$,
	is topologically closed in~$\Xcal(M)$.
	%
	%
	%
\end{proof}

Notice that the condition $\LieD_{\xi}\mathsf{g} = F\mathsf{g}$ for a vector field~$\xi$
to be conformal is not as
``straightforward'' as the conditions for being volume preserving or symplectic, since
the function~$F$ depends implicitly on~$\xi$. We now work out an explicit coordinate
version of this condition in the case when the manifold~$M$ is conformally flat.

First, recall that a Riemannian manifold is locally conformally flat if for every
element $z\in M$ there exists a neighborhood~$U$ of~$z$ 
and a function~$f\in\Fcal(U)$ such that~$e^{2f}\mathsf{g}$ is a flat metric
on~$U$. Thus, we may chose local coordinates mapping~$U$ conformally to
flat Euclidean space, i.e., such that $\mathsf{g} = c \sum_{i}\dd x^{i}\otimes \dd x^{i}$,
with $c=e^{2 f}$.
Next, consider a vector field expressed in these coordinates
$\xi = \sum_{i}u^{i}\frac{\pd}{\pd x^{i}}$. Then
\begin{equation*}
	\begin{split}
		\LieD_{\xi}\mathsf{g} &= (\LieD_{\xi} c) \sum_{i}\dd x^{i}\otimes\dd x^{i}
		+ c \sum_{i} \big(\dd u^i\otimes\dd x^i + \dd x^i \otimes \dd u^i \big)
		\\
		&= \sum_{i} 
			(2 c \frac{\pd u^i}{\pd x^i} + \interior_{\xi}\dd c)\dd x^i\otimes\dd x^i + 
			\sum_{i< j} c \big( \frac{\pd u^i}{\pd x^j}+\frac{\pd u^j}{\pd x^i}\big) 
			\big(\dd x^i \otimes \dd x^j+ \dd x^j \otimes \dd x^i\big) .
		\\
		&= \sum_{i} 
			2(\frac{\pd u^i}{\pd x^i} + \interior_{\xi}\dd f) c \ud x^i\otimes\dd x^i + 
			\sum_{i< j} c \big( \frac{\pd u^i}{\pd x^j}+\frac{\pd u^j}{\pd x^i}\big) 
			\big(\dd x^i \otimes \dd x^j+ \dd x^j \otimes \dd x^i\big) .
	\end{split}
\end{equation*}
From this we get that $\LieD_{\xi}\mathsf{g}$ is pointwise parallel with
$\mathsf{g}$ if the components of~$\xi$ fulfill the following 
$n(n+2)/2 - 1$ relations
\begin{equation}\label{eq:coordinate_conformal_conditions}
	\left\{
	\begin{split}
		\frac{\pd u^{i}}{\pd x^{i}} - \frac{\pd u^{i+1}}{\pd x^{i+1}} &= 0 \qquad \forall\, i < n \\
		\frac{\pd u^{i}}{\pd x^{j}} + \frac{\pd u^{j}}{\pd x^{i}} &= 0  \qquad \forall\, i < j  .
	\end{split}
	\right.
\end{equation}
Notice that these equations are independent of the function~$c$. Also,
notice that if $M$ is a 2-manifold, these are the Cauchy-Riemann equations.

The coordinate formula (in conformally flat coordinates) for the
divergence with respect to the
volume form induced by~$\mathsf{g}$ is given by
\begin{equation*}
	\divv(\xi)= \interior_{\xi}\dd c + 
	c\sum_{i}\frac{\pd u^{i}}{\pd x^{i}} .
\end{equation*}
Thus, we see that if $M$ is a 2--manifold, and $\xi\in\Xcalcon(M)$,
then locally we have $\LieD_{\xi}\mathsf{g} = \divv(\xi)/c\, \mathsf{g}$.
In particular, if $(M,\mathsf{g})$~is a flat 2--manifold, then 
it holds that $\LieD_{\xi}\mathsf{g} = \divv(\xi)\mathsf{g}$.



\OLD{ 

There is another subset of $\Xcal(M)$ given by
\[
	\Xcal_{c,\divv} = \{ \xi \in \Xcal(M); \LieD_{\xi}\mathsf{g} = c \divv(\xi)\mathsf{g} \},
\]
where $c\in\R$ is a fixed scalar.

\begin{lemma}\label{lem:divvc_algebra}
	$\Xcal_{c,\divv}(M)$ is a Fréchet-Lie subalgebra of $\Xcal(M)$.
\end{lemma}

\begin{proof}
	First,
	%
	let $\xi,\eta\in\Xcal_{c,\divv}(M)$. Then
	\[
		\begin{split}
			\LieD_{\LieD_{\xi}\eta}\mathsf{g} &= \LieD_{\xi}\LieD_{\eta}\mathsf{g}
			- \LieD_{\eta}\LieD_{\xi}\mathsf{g}
			= \LieD_{\xi}(c\divv(\xi)\mathsf{g}) - \LieD_{\eta}(c\divv(\eta)\mathsf{g})
			\\
			&= c\big(\LieD_{\xi}\divv(\eta)-\LieD_{\eta}\divv(\xi)\big)\mathsf{g}
			+ c^2\divv(\xi)\divv(\eta)\mathsf{g} - c^2\divv(\eta)\divv(\xi)\mathsf{g}
			\\
			&= c\big(\LieD_{\xi}\star\LieD_{\eta}\vol-\LieD_{\eta}\star\LieD_{\xi}\vol\big)\mathsf{g}
			= c\big(\star\LieD_{\xi}\LieD_{\eta}\vol-\star\LieD_{\eta}\LieD_{\xi}\vol\big)\mathsf{g}
			\\
			&= c(\star\LieD_{\LieD_{\xi}\eta}\vol)\mathsf{g}
			= c\divv(\LieD_{\xi}\eta)\mathsf{g} ,
		\end{split}
	\]
	where, in the third to last equality, we use that
	\[
		\begin{split}
			\star\LieD_{\xi} \LieD_{\eta} \vol &= \star\LieD_{\xi} (\divv(\eta) \vol ) = 
			\star(\LieD_{\xi}\divv(\xi))\vol + \star \divv(\xi)\divv(\eta)\vol
			\\
			& =
			\LieD_{\xi}\star\divv(\xi)\vol + \divv(\xi)\divv(\eta)
			= \LieD_{\xi}\star\LieD_{\eta}\vol + \divv(\xi)\divv(\eta) .
		\end{split}
	\]
	This implies that $[\xi,\eta]=-\LieD_{\xi}\eta\in\Xcal_{c,\divv}(M)$.
	That $\Xcal_{c,\divv}(M)$ is topologically closed in $\Xcal(M)$ is
	proved as in Lemma~\ref{lem:conformal_vector_fields_algebra},
	but with $\Phi$ defined by $\xi\mapsto \LieD_{\xi}\mathsf{g} - c\divv(\xi)\mathsf{g}$.
	Then $\Phi$ is again smooth, since $\divv:\Xcal(M)\to\Fcal(M)$ is smooth.
	Thus, $\Xcal_{c,\divv}(M) = \Phi^{-1}(\{0\})$ is closed.
\end{proof}

Clearly $\Xcal_{c,\divv}(M)\subset \Xcal_\con(M)$.
It is also clear that $\Xcal_{0,\divv}(M) = \Xcal_\iso(M)$, i.e., the
subalgebra of Killing vector fields (which generates isometries).
We recall the following well known result.

\begin{lemma}\label{lem:c_divv_eq_conformal_for_flat}
	If the metric $\mathsf{g}$ is flat then
	$\Xcal_\con(M) = \Xcal_{2/n,\divv}(M)$.
\end{lemma}

\begin{proof}
	It is enough to prove the assertion in local coordinates
	$(x^1,\ldots,x^n)$ where the metric takes the form
	$\mathsf{g} = \sum_i \dd x^i \otimes\dd x^i$.
	Let $\xi = \sum_i u^i \frac{\pd}{\pd x^i}$. Then
	\[
		\begin{split}
			\LieD_{\xi}\mathsf{g} &= 
			\sum_{i} \big(\dd\LieD_{\xi}x^i\otimes\dd x^i + \dd x^i \otimes \dd \LieD_{\xi}x^i \big)
			=\sum_{i} \big(\dd u^i\otimes\dd x^i + \dd x^i \otimes \dd u^i \big)
			\\
			&=\sum_{i,j} 
				\frac{\pd u^i}{\pd x^j} \big( \dd x^j \otimes \dd x^i+ \dd x^i \otimes \dd x^j \big)
			\\
			&=\sum_{i} 
				2\frac{\pd u^i}{\pd x^i}\dd x^i\otimes\dd x^i + 
				\sum_{i< j} \big( \frac{\pd u^i}{\pd x^j}+\frac{\pd u^j}{\pd x^i}\big) 
				\big(\dd x^i \otimes \dd x^j+ \dd x^j \otimes \dd x^i\big) .
		\end{split}
	\]
	Thus, the requirement for $\xi\in\Xcal_\con(M)$ is that
	$\frac{\pd u^i}{\pd x^i}=\frac{\pd u^j}{\pd x^j}$ and
	$\frac{\pd u^i}{\pd x^j}=-\frac{\pd u^j}{\pd x^i}$ in which case we
	have $\LieD_{\xi}\mathsf{g} = 2 \frac{\pd u^1}{\pd x^1}\mathsf{g} = \frac{2}{n}\divv(\xi)\mathsf{g}$.
\end{proof}

From these calculations we see that there are $n-1 + n(n-1)/2 = (n+2)(n-1)/2$ number of 
linearly independent relations
imposed on the $n^2$ components $\frac{\pd u^i}{\pd x^j}$ of the Jacobian.
For $n=1$ there are no relations, reflecting the fact that every vector field
on a 1--dimensional manifold is conformal. For $n=2$ there are there
are 2 relations (the Cauchy-Riemann equations), and thus $4-2=2$
``free'' parameters. For $n>2$ there are $n^2-(n+2)(n-1)/2 < n^2/2$
free parameters. In this case, it is known from Liouville's theorem
that $\Xcal_\con(M)$ is finite dimensional.
In this paper we are interested in the case~$n=2$, which,
as we will see, corresponds to the infinite dimensional space of
holomorphic vector fields.

} 

\section{Hodge decomposition} 
\label{sec:review_of_the_hodge_decomposition}

In this section, let $(M,\mathsf{g})$ be a compact oriented $n$--dimensional Riemannian manifold
possibly with boundary,
and let $\Omega^k(M)$ denote the space of smooth $k$--forms on~$M$.
We sometimes use the notation $\Fcal(M)$ for $\Omega^0(M)$.
Recall the Hodge star operator $\star:\Omega^k(M)\to\Omega^{n-k}(M)$,
which is defined in terms of the metric (see~\cite[Chap.~6]{AbMaRa1988}).
Using the Hodge star, the space $\Omega^k(M)$ is 
equipped with the $L^2$~inner product:
\[
	\pair{\alpha,\beta}_{M} := \int_M \alpha\wedge\star\beta .
\]
Up to a boundary integral term, the 
\emph{co--differential} $\delta:\Omega^k(M)\to\Omega^{k-1}(M)$
is the formal adjoint of the differential with respect to the $L^2$~inner product.
Indeed, it holds that
\[
	\pair{\ud\gamma,\beta}_{M} = 
	\pair{\gamma,\delta\beta}_{M} + 
	\int_{\pd M}\gamma\wedge\star\beta .
\]
The explicit formula is $\delta = (-1)^{(n-k+1)k}\star\!\dd\star$.


The subspaces $\Omega^k_{\mathsf{t}}(M),\Omega^k_{\mathsf{n}}(M)\subset\Omega^k(M)$
of tangential and normal $k$--forms 
are defined by
\[
	\begin{split}
		\Omega^k_{\mathsf{t}}(M) &= \{ \alpha\in\Omega^k(M); i^*(\star\alpha)=0 \}
		\\
		\Omega^k_{\mathsf{n}}(M) &= \{ \alpha\in\Omega^k(M); i^*(\alpha)=0 \}
	\end{split}
\]
where $i:\pd M \to M$ is the inclusion.
We also have the subspace of $k$--forms that vanish
on the boundary: $\Omega^k_0(M)=\Omega^k_{\mathsf{t}}(M)\cap\Omega^k_{\mathsf{n}}(M)$.
Notice that $\Omega^0_{\mathsf{n}}(M)=\Omega^0_0(M)$, $\Omega^0_{\mathsf{t}}(M)=\Omega^0(M)$,
$\Omega^n_{\mathsf{t}}(M)=\Omega^n_0(M)$, and $\Omega^n_{\mathsf{n}}(M)=\Omega^{n}(M)$.
It also holds that $\star\,\Omega^k_{\mathsf{t}}(M) = \Omega^{n-k}_{\mathsf{n}}(M)$.

The following
fundamental result is known as the
Hodge decomposition theorem for manifolds with boundary (see e.g.~\cite{Sc1995}):

\begin{theorem}\label{thm:Hodge_boundary_general}
	%
	$\Omega^k(M)$ admits the $L^2$~orthogonal
	decomposition
	\[
		\Omega^k(M) = \ud\Omega_{\mathsf{n}}^{k-1}(M)\oplus\delta
			\Omega^{k+1}_{\mathsf{t}}(M)\oplus\mathcal{H}^k(M),
	\]
	where $\mathcal{H}^k(M) = \{ \alpha\in\Omega^k(M); \ud\alpha=0, \delta\alpha=0 \}$
	are the harmonic $k$--fields. 
\end{theorem}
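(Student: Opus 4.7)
The plan is to split the theorem into two complementary components: checking the $L^2$--orthogonality of the three summands, which is essentially algebraic, and establishing that their sum exhausts $\Omega^k(M)$, which is the genuine analytic content.

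For orthogonality I would repeatedly apply the adjoint relation
\[
\pair{\ud\gamma,\beta}_M = \pair{\gamma,\delta\beta}_M + \int_{\pd M}\gamma\wedge\star\beta
\]
stated just above the theorem. Taking $\gamma\in\Omega^{k-1}_{\mathsf{n}}(M)$ and substituting $\delta\beta$ for $\beta$, where $\beta\in\Omega^{k+1}_{\mathsf{t}}(M)$, gives $\pair{d\gamma,\delta\beta}_M = \pair{\gamma,\delta^2\beta}_M + \int_{\pd M}\gamma\wedge\star\delta\beta = 0$, using $\delta^2 = 0$ together with $i^*\gamma = 0$. For $h\in\mathcal{H}^k(M)$ the same formula with $\beta = h$ yields $\pair{d\gamma,h}_M = \pair{\gamma,\delta h}_M + \int_{\pd M}\gamma\wedge\star h = 0$, since $\delta h = 0$ and $i^*\gamma = 0$. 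Symmetrically, $\pair{h,\delta\beta}_M = \pair{dh,\beta}_M - \int_{\pd M}h\wedge\star\beta = 0$, using $dh = 0$ and $i^*(\star\beta) = 0$. In each case the boundary contribution vanishes precisely because of the tangentiality or normality assumption on one of the forms involved.

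For completeness I would, given $\alpha\in\Omega^k(M)$, first let $h$ denote the $L^2$--projection of $\alpha$ onto the harmonic fields $\mathcal{H}^k(M)$, and then seek an auxiliary form $\omega\in\Omega^k(M)$ solving the Hodge--Laplace equation $(d\delta + \delta d)\omega = \alpha - h$ subject to the mixed boundary conditions $i^*(\delta\omega) = 0$ and $i^*(\star d\omega) = 0$. These conditions force $\delta\omega\in\Omega^{k-1}_{\mathsf{n}}(M)$ and $d\omega\in\Omega^{k+1}_{\mathsf{t}}(M)$, so that setting $\gamma := \delta\omega$ and $\beta := d\omega$ produces the desired identity $\alpha = d\gamma + \delta\beta + h$. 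The direct--sum assertion then follows formally from the orthogonality established above.

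The hard step is the existence and regularity of $\omega$: this amounts to the well--posedness of an elliptic boundary value problem for the Hodge Laplacian under Morrey--type boundary conditions. Its solvability requires verifying the Lopatinski--Shapiro ellipticity criterion for the pair (operator, boundary conditions), establishing a Gaffney--type coercive estimate controlling $\|\omega\|_{H^1}$ by $\|d\omega\|_{L^2}^2 + \|\delta\omega\|_{L^2}^2 + \|\omega\|_{L^2}^2$ on the admissible class, checking that the kernel is precisely $\mathcal{H}^k(M)$ so that the orthogonality condition $\alpha - h\perp\mathcal{H}^k(M)$ suffices for solvability, and invoking elliptic regularity up to the boundary in order to upgrade an $L^2$--solution to a smooth one. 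Since the theorem is merely recalled rather than proved afresh here, I would not reproduce these steps but simply cite~\cite{Sc1995}, limiting my own contribution to the orthogonality calculation and to identifying the boundary conditions on $\omega$ that match the statement.
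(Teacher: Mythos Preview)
The paper does not prove this theorem at all: it is stated as a known result and attributed to~\cite{Sc1995}, with no argument given. Your proposal therefore goes strictly beyond what the paper does. Your orthogonality computations are correct, and your outline of the completeness step via the mixed boundary value problem for the Hodge Laplacian is the standard route taken in the cited reference; since you also plan to defer the elliptic analysis to~\cite{Sc1995}, your treatment is consistent with the paper's, only more explicit.
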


The harmonic fields $\mathcal{H}^{k}(M)$ can be further decomposed
in two different ways, called
Friedrich decompositions (see e.g.~\cite{Sc1995}).

\begin{theorem}\label{thm:Friedrich_decomposition}
	$\mathcal{H}^{k}(M)$ admits the $L^{2}$~orthogonal decompositions
	\begin{equation*}
		\begin{split}
			\mathcal{H}^{k}(M) &= \mathcal{H}_{\mathsf{t}}^{k}(M)\oplus \{ \alpha \in\mathcal{H}^{k}(M); \alpha = \dd \epsilon \}\\
			\mathcal{H}^{k}(M) &= \mathcal{H}_{\mathsf{n}}^{k}(M)\oplus \{ \alpha \in\mathcal{H}^{k}(M); \alpha = \delta \gamma \}
		\end{split}
	\end{equation*}
	where $\mathcal{H}^{k}_{\mathsf{t}}(M)$ and $\mathcal{H}^{k}_{\mathsf{n}}(M)$ are the harmonic $k$--fields
	that are respectively tangential and normal.
\end{theorem}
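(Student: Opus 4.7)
The plan is first to establish the $L^2$--orthogonality of the two summands in each decomposition, and then to prove that every harmonic field lies in the sum.

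For orthogonality in the first decomposition, take $\alpha \in \mathcal{H}^k_{\mathsf{t}}(M)$ and $d\epsilon \in \mathcal{H}^k(M)$ and apply the integration-by-parts identity from the excerpt:
\begin{equation*}
\langle d\epsilon, \alpha\rangle_M = \langle \epsilon, \delta\alpha\rangle_M + \int_{\partial M}\epsilon \wedge \star\alpha.
\end{equation*}
The first term vanishes because $\delta\alpha = 0$ (harmonicity), and the second because $i^*(\star\alpha) = 0$ (tangentiality). For the second decomposition, take $\alpha \in \mathcal{H}^k_{\mathsf{n}}(M)$ and $\delta\gamma \in \mathcal{H}^k(M)$ and apply the same identity with a shift of degree:
\begin{equation*}
\langle d\alpha, \gamma\rangle_M = \langle \alpha, \delta\gamma\rangle_M + \int_{\partial M}\alpha \wedge \star\gamma.
\end{equation*}
Now $d\alpha = 0$ (harmonicity) and $i^*\alpha = 0$ (normality), so rearranging gives $\langle \alpha, \delta\gamma\rangle_M = 0$.

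For the direct-sum completeness of the first decomposition, I would show that every $\alpha \in \mathcal{H}^k(M)$ admits a representation $\alpha = \alpha_{\mathsf{t}} + d\epsilon$ with $\alpha_{\mathsf{t}} \in \mathcal{H}^k_{\mathsf{t}}(M)$ and $d\epsilon \in \mathcal{H}^k(M)$. The requirements on $\epsilon \in \Omega^{k-1}(M)$ are that $d\epsilon$ be harmonic, i.e., $\delta d\epsilon = 0$ in $M$ (closedness of $d\epsilon$ is automatic), and that $i^*(\star d\epsilon) = i^*(\star\alpha)$ on $\partial M$ so that the remainder is tangential. This is a Neumann-type elliptic boundary value problem for the Hodge Laplacian on $\epsilon$. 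To obtain well-posedness one fixes the gauge by additionally requiring $\delta\epsilon = 0$ in $M$ and $i^*(\star\epsilon) = 0$ on $\partial M$, which kills the kernel and leaves a uniquely solvable elliptic system. The core analytic content is precisely the existence and regularity theory developed in \cite{Sc1995}; this is where the real work lies.

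The second decomposition then follows from the first by Hodge duality. The Hodge star is an $L^2$ isometric isomorphism $\star\colon \mathcal{H}^k(M) \to \mathcal{H}^{n-k}(M)$ that interchanges $\mathcal{H}^k_{\mathsf{t}}(M)$ with $\mathcal{H}^{n-k}_{\mathsf{n}}(M)$ and conjugates $d$ to $\pm \delta$. Applying the first Friedrich decomposition at degree $n-k$ and then transporting the result back through $\star$ produces the second decomposition at degree $k$. The main obstacle throughout is the existence step for the first decomposition, which requires nontrivial elliptic theory on manifolds with boundary; once that is granted, orthogonality and the duality argument are essentially formal consequences of Stokes' theorem and the boundary behaviour of $\star$.
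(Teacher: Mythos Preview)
The paper does not actually prove this theorem: it is stated as a known result with a citation to \cite{Sc1995}, and the surrounding discussion simply uses the two Friedrich decompositions as black boxes to build the six-term complete Hodge decomposition in Corollary~\ref{cor:complete_hodge}. So there is no ``paper's own proof'' to compare against.

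Your proposal, by contrast, is an honest sketch of a proof and is essentially correct. The orthogonality arguments via Stokes' theorem are clean and complete as written. For the existence step you correctly identify that one needs to solve a Neumann-type boundary value problem for the Hodge Laplacian (find $\epsilon$ with $\delta d\epsilon = 0$, $\delta\epsilon = 0$, $i^*(\star\epsilon)=0$, and prescribed $i^*(\star d\epsilon)$), and you rightly flag that the analytic content lives in the elliptic regularity theory of \cite{Sc1995}. The Hodge-star duality reduction of the second decomposition to the first is also correct. In short: your write-up already supplies more than the paper does for this statement; the paper treats Theorem~\ref{thm:Friedrich_decomposition} purely as background.
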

Thus, every harmonic field is decomposed into either: (i) a tangential harmonic field plus an exact harmonic field
or (ii) a normal harmonic field plus a co-exact harmonic field.

Notice that by combining the first Friedrich decomposition in Theorem~\ref{thm:Friedrich_decomposition} 
with the Hodge decomposition in Theorem~\ref{thm:Hodge_boundary_general}, we
obtain an $L^{2}$~orthogonal decomposition of exact~$k$--forms as
\begin{equation*}
	\dd\Omega^{k-1}(M) = \dd\Omega^{k-1}_{\mathsf{n}}(M)\oplus \{ \alpha \in\mathcal{H}^{k}(M); \alpha = \dd \epsilon \}.
\end{equation*}
Indeed, for any $\epsilon\in\Omega^{k-1}(M)$ it cannot hold that $\dd\epsilon$ belongs to $\mathcal{H}^{k}_{\mathsf{t}}(M)$
or $\delta\Omega^{k+1}_{\mathsf{t}}(M)$ since that would imply that $\dd\epsilon$ also belongs to
$\{ \alpha \in\mathcal{H}^{k}(M); \alpha = \dd \epsilon \}$, which is disjoint to both.
Similarly, we get
\begin{equation*}
	\delta\Omega^{k+1}(M) = \delta\Omega^{k+1}_{\mathsf{t}}(M)\oplus \{ \alpha \in\mathcal{H}^{k}(M); \alpha = \delta \gamma \}.
\end{equation*}

Closely related to the harmonic $k$--fields $\mathcal{H}^k(M)$ are the \emph{harmonic $k$--forms},
given by $H^k(M) = \{ \alpha \in\Omega^k(M); \Delta \alpha = 0 \}$,
where $\Delta:=\delta\circ\dd + \dd\circ\delta$ is the Laplace--deRham operator.
For a closed manifold
it holds that $\mathcal{H}^k(M) = H^k(M)$.
However, in the presence of a boundary, $\mathcal{H}^k(M)$ is
strictly smaller than $H^k(M)$.

By combining the two versions of the Friedrich decomposition in Theorem~\ref{thm:Friedrich_decomposition}
we obtain four mutually orthogonal subspaces of~$\mathcal{H}^k(M)$. 
Altogether, we thus have six mutually orthogonal subspaces of~$\Omega^k(M)$,
which are given in the following table.

\begin{center}
	\begin{tabular}{ll}
		\toprule
		Short name & Definition \\
		\midrule
		$A^k_1(M)$ & $\dd \Omega^{k-1}_\mathsf{n}(M)$ \\
		$A^k_2(M)$ & $\delta \Omega^{k+1}_\mathsf{t}(M)$ \\
		$A^k_3(M)$ & $\mathcal{H}^k_\mathsf{n}(M)\cap\mathcal{H}^k_\mathsf{t}(M)$ \\
		$A^k_4(M)$ & $\mathcal{H}^k_\mathsf{n}(M)\cap \dd\Omega^{k-1}(M)$ \\
		$A^k_5(M)$ & $\mathcal{H}^k_\mathsf{t}(M)\cap \delta\Omega^{k+1}(M)$ \\
		$A^k_6(M)$ & $\dd\Omega^{k-1}(M)\cap \delta\Omega^{k+1}(M)$ \\
		\bottomrule
	\end{tabular}
\end{center}

Theorem~\ref{thm:Hodge_boundary_general} together with Theorem~\ref{thm:Friedrich_decomposition}
now yields the following result, which we naturally call the complete Hodge decomposition
for manifolds with boundary.

\begin{corollary}\label{cor:complete_hodge}
	$\Omega^k(M)$ admits the $L^2$~orthogonal
	decomposition
	\[
		\Omega^k(M) = \bigoplus_{l=1}^6 A^k_l(M) .
	\]
\end{corollary}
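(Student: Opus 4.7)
The plan is to combine Theorem~\ref{thm:Hodge_boundary_general} with two applications of Theorem~\ref{thm:Friedrich_decomposition}. Since the Hodge decomposition already yields the three-fold $L^2$-orthogonal splitting $\Omega^k(M) = A^k_1(M) \oplus A^k_2(M) \oplus \mathcal{H}^k(M)$, the task reduces to establishing the four-fold $L^2$-orthogonal decomposition $\mathcal{H}^k(M) = A^k_3(M) \oplus A^k_4(M) \oplus A^k_5(M) \oplus A^k_6(M)$.

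The first step is to verify that $A^k_3, A^k_4, A^k_5, A^k_6$ all lie inside $\mathcal{H}^k(M)$. This is immediate from the definitions for $A^k_3, A^k_4, A^k_5$, while for $A^k_6 = \dd\Omega^{k-1}(M) \cap \delta\Omega^{k+1}(M)$ any element $\alpha = \dd\epsilon = \delta\gamma$ satisfies $\dd\alpha = 0$ and $\delta\alpha = 0$, so is automatically harmonic. Consequently, orthogonality of each $A^k_l$, $l \geq 3$, with $A^k_1$ and $A^k_2$ is inherited from Theorem~\ref{thm:Hodge_boundary_general}. Mutual orthogonality among $A^k_3, A^k_4, A^k_5, A^k_6$ then follows from the two Friedrich decompositions: the first gives $\mathcal{H}^k_\mathsf{t} \perp \{\alpha \in \mathcal{H}^k : \alpha = \dd\epsilon\}$, covering the four pairs between $\{A^k_3, A^k_5\}$ and $\{A^k_4, A^k_6\}$; the second gives $\mathcal{H}^k_\mathsf{n} \perp \{\alpha \in \mathcal{H}^k : \alpha = \delta\gamma\}$, covering the four pairs between $\{A^k_3, A^k_4\}$ and $\{A^k_5, A^k_6\}$. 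Together these handle all six pairs.

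It remains to show that every $\alpha \in \mathcal{H}^k(M)$ admits such a four-fold expansion. Applying the first Friedrich decomposition produces $\alpha = \alpha_\mathsf{t} + \alpha_\mathcal{E}$ with $\alpha_\mathsf{t} \in \mathcal{H}^k_\mathsf{t}(M)$ and $\alpha_\mathcal{E} \in \mathcal{H}^k(M) \cap \dd\Omega^{k-1}(M)$. The idea is then to apply the second Friedrich decomposition to each summand and read off the four $A^k_l$-components. The main obstacle, and the substantive content of the proof, is to confirm that this procedure is well-defined: namely, that the $L^2$-orthogonal projections associated with the two Friedrich decompositions of $\mathcal{H}^k(M)$ commute, or equivalently that the second Friedrich decomposition of a tangential harmonic field yields two tangential pieces, and that the second Friedrich decomposition of an exact harmonic field yields two exact pieces. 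To settle this, I would exploit the refined identities $\dd\Omega^{k-1}(M) = \dd\Omega^{k-1}_\mathsf{n}(M) \oplus \{\alpha \in \mathcal{H}^k : \alpha = \dd\epsilon\}$ and $\delta\Omega^{k+1}(M) = \delta\Omega^{k+1}_\mathsf{t}(M) \oplus \{\alpha \in \mathcal{H}^k : \alpha = \delta\gamma\}$ displayed immediately before the corollary, together with the integration-by-parts formula and the boundary conditions $i^*\alpha = 0$ or $i^*\star\alpha = 0$, to check the required compatibility.
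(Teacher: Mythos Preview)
Your approach is the same as the paper's, which offers no proof beyond the sentence ``Theorem~\ref{thm:Hodge_boundary_general} together with Theorem~\ref{thm:Friedrich_decomposition} now yields the following result.'' You are in fact more careful than the paper: you correctly isolate the one non-obvious step---that the two Friedrich projections on $\mathcal{H}^k(M)$ commute, so that iterating them lands each piece in the appropriate intersection $A^k_l$---and you sketch a plausible route to verify it via the refined identities and integration by parts, whereas the paper simply asserts that ``combining the two versions of the Friedrich decomposition \ldots\ we obtain four mutually orthogonal subspaces.''
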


\begin{remark}\label{rmk:trivialA3}
	If $M$ does not have a boundary, then
	$A^k_4(M),A^k_5(M),A^k_6(M)$ are trivial, and Corollary~\ref{cor:complete_hodge}
	reduced to the ordinary Hodge decomposition for closed manifolds.
	In contrast, if~$M$ is the closure of a bounded open subset of~$\R^n$ with smooth boundary,
	then $A_3^k(M)$ is trivial, since a harmonic field on such a manifold 
	which is zero on the boundary, must be zero also in the interior.
	(The Laplace equation with Dirichlet boundary conditions is well-posed.)
\end{remark}

\subsection{Characterisation in terms of four fundamental subspaces} 
\label{sub:characterisation_in_terms_of_four_fundamental_subspaces}

The differential and co-differential induces four fundamental
subspaces of $\Omega^k(M)$ given by
\begin{equation*}
	\begin{split}
		\image\dd &= \dd\Omega^{k-1}(M), \\
		\ker\dd &= \{\alpha\in\Omega^k(M);\dd\alpha=0\}, \\
		\image\delta &= \delta\Omega^{k+1}(M), \\
		\ker\delta &= \{\alpha\in\Omega^k(M);\delta\alpha=0\}.
	\end{split}
\end{equation*}
Notice that $\image\dd \subset \ker\dd$ and $\image\delta\subset\ker\delta$.
Also notice that the intersection between any two of these
subspaces in general is non-empty.

Each of the mutually orthogonal spaces $A^k_l(M)$
can be characterised by orthogonal complements and intersections
of the four fundamental subspaces. 

\begin{proposition}\label{pro:fundamental_characterisation_of_hodge}
	It holds that
	\begin{equation*}
		\begin{split}
			A^k_1(M) &= (\ker\delta)^\bot \\
			A^k_2(M) &= (\ker\dd)^\bot \\
			A^k_3(M) &= (\image\dd)^\bot\cap(\image\delta)^\bot \\
			A^k_4(M) &= \ker\delta\cap\image\dd\cap(\image\delta)^\bot \\
			A^k_5(M) &= \ker\dd\cap\image\delta\cap(\image\dd)^\bot \\
			A^k_6(M) &= \image\dd\cap\image\delta \\
		\end{split}
	\end{equation*}
\end{proposition}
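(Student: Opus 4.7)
The strategy I would follow is to first express each of the four fundamental subspaces $\ker\dd$, $\ker\delta$, $\image\dd$, $\image\delta$ as a direct sum of some of the $A^k_l(M)$ coming from Corollary~\ref{cor:complete_hodge}; once this is done, the six identities of the proposition reduce to elementary bookkeeping on the complete Hodge decomposition. Concretely, the two descriptions I would establish are
\[
\ker\dd = A^k_1 \oplus A^k_3 \oplus A^k_4 \oplus A^k_5 \oplus A^k_6, \qquad \image\dd = A^k_1 \oplus A^k_4 \oplus A^k_6,
\]
together with the symmetric statements for $\delta$.

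For the kernel descriptions, the inclusions $\supset$ are immediate: $A^k_1 \subset \image\dd \subset \ker\dd$ by $\dd^2=0$, and $A^k_3,\ldots,A^k_6 \subset \mathcal{H}^k(M) \subset \ker\dd$ by definition of harmonic fields. The only real content is to show $A^k_2 \cap \ker\dd = \{0\}$, which follows from the standard Green identity applied to $\alpha = \delta\gamma \in A^k_2$ with $\gamma\in\Omega^{k+1}_{\mathsf{t}}(M)$:
\[
\pair{\delta\gamma,\delta\gamma}_M = \pair{\dd\delta\gamma,\gamma}_M - \int_{\pd M}\delta\gamma\wedge\star\gamma,
\]
where the boundary term vanishes by tangentiality of $\gamma$ and the bulk term vanishes on $\ker\dd$. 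The description of $\ker\delta$ is entirely symmetric.

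For the image descriptions, I would start from the derived decomposition displayed just after Theorem~\ref{thm:Friedrich_decomposition}, namely $\image\dd = \dd\Omega^{k-1}_{\mathsf{n}}(M) \oplus \{\alpha\in\mathcal{H}^k(M);\alpha=\dd\epsilon\} = A^k_1 \oplus \bigl(\mathcal{H}^k(M) \cap \image\dd\bigr)$. It then suffices to check that the $A^k_3$ and $A^k_5$ components of elements of $\mathcal{H}^k(M)\cap\image\dd$ vanish; both cases reduce to the observation that an exact tangential harmonic field $h = \dd\eta \in \mathcal{H}^k_{\mathsf{t}}(M)$ must vanish, because
\[
\pair{h,h}_M = \pair{\eta,\delta h}_M + \int_{\pd M}\eta\wedge\star h = 0,
\]
using $\delta h = 0$ and $i^*\!\star h = 0$. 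Since $A^k_3, A^k_5 \subset \mathcal{H}^k_{\mathsf{t}}(M)$, this gives $\mathcal{H}^k(M)\cap\image\dd = A^k_4 \oplus A^k_6$. The argument for $\image\delta$ is analogous and uses that $A^k_3, A^k_4 \subset \mathcal{H}^k_{\mathsf{n}}(M)$.

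With the four descriptions in hand, each of the six identities reduces to a one-line intersection calculation; for instance $(\image\dd)^\bot \cap (\image\delta)^\bot = (A^k_2 \oplus A^k_3 \oplus A^k_5) \cap (A^k_1 \oplus A^k_3 \oplus A^k_4) = A^k_3$, and the remaining ones are verified identically using the mutual orthogonality of the six summands. I expect the main obstacle to be the image description rather than the final bookkeeping: identifying $\image\dd$ genuinely requires the Friedrich refinement together with the boundary vanishing above, whereas the kernel descriptions need only $\dd^2=0$ and a single application of Green's identity.
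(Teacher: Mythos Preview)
Your proposal is correct and follows essentially the same route as the paper: first identify $\ker\dd$, $\ker\delta$, $\image\dd$, $\image\delta$ as direct sums of the $A^k_l$, then read off the six identities by set operations. The paper's version is simply terser---it states the four identities $\image\dd = A^k_1\oplus A^k_4\oplus A^k_6$, $\image\delta = A^k_2\oplus A^k_5\oplus A^k_6$, $\ker\dd = A^k_1\oplus A^k_3\oplus A^k_4\oplus A^k_5\oplus A^k_6$, $\ker\delta = A^k_2\oplus A^k_3\oplus A^k_4\oplus A^k_5\oplus A^k_6$ as immediate consequences of the definitions of the $A^k_l$ together with the Friedrich decompositions already recorded before the proposition, and then invokes Corollary~\ref{cor:complete_hodge}---whereas you re-derive via Green's identity the vanishing facts ($A^k_2\cap\ker\dd=0$, exact tangential harmonic fields vanish) that are already packaged inside Theorems~\ref{thm:Hodge_boundary_general} and~\ref{thm:Friedrich_decomposition}.
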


\begin{proof}
	From mutual orthogonality between $A^k_l(M)$ we obtain
	\begin{equation*}
		\begin{split}
			\image \dd &= A^k_1(M)\oplus A^k_4(M)\oplus A^k_6(M) \\
			\image \delta &= A^k_2(M)\oplus A^k_5(M)\oplus A^k_6(M) \\
			\ker \dd &= A^k_1(M)\oplus A^k_3(M)\oplus A^k_4(M)\oplus A^k_5(M)\oplus A^k_6(M) \\
			\ker \delta &= A^k_2(M)\oplus A^k_3(M)\oplus A^k_4(M)\oplus A^k_5(M)\oplus A^k_6(M) .
		\end{split}
	\end{equation*}
	Using Corollary~\ref{cor:complete_hodge},
	the result now follows from basic set operations.
\end{proof}


\begin{remark}
	A special case of Proposition~\ref{pro:fundamental_characterisation_of_hodge}
	is given in~\cite{CaDeGl2002}. Indeed, that paper
	gives a characterisation of the Helmholtz decomposition
	of vector fields on bounded domains of $\R^3$ in terms of
	the kernel and image of the $\grad$ and $\curl$ operators.
\end{remark}


\subsection{Special case of 2--manifolds} 
\label{sub:vector_calculus_on_2_manifolds}

In this section we analyse in detail the complete Hodge decomposition
in the case of 2--manifolds. The de~Rham complex and co-complex for a Riemannian 
2--manifold $(M,\mathsf{g})$ is
\[
	\xymatrix{ 
		\Omega^{0}(M)\ar[d]_{\star} \ar[r]^{\dd} & \Omega^{1}(M) 
		\ar[d]_{\star}\ar[r]^{\dd} & \Omega^{2}(M)\ar[d]_{\star} \\
		\Omega^{2}(M) \ar[r]^{\delta} & \Omega^{1}(M) \ar[r]^{\delta} & \Omega^{0}(M)
	}
\]
so the Hodge star maps $\Omega^1(M)$ isomorphically to itself. If $\alpha\in\Omega^1(M)$,
then $\star\star\alpha = -\alpha$, so the Hodge star induces an almost complex structure 
on~$M$. Also, since the Hodge star maps normal forms to tangential
forms, closed forms to co-closed forms, and exact forms to co-exact forms (and vice-versa),
it holds that $\star A^1_1(M) = A^1_2(M)$ and $\star A^1_4(M) = A^1_5(M)$.
It also holds that $\star A^1_3(M) = A^1_3(M)$ and $\star A^1_6(M) = A^1_6(M)$.

We now work out the complete Hodge decomposition of $\Omega^1(M)$ 
in some standard examples. These relevant later in the paper,
when we discuss the Lie algebra of conformal vector fields.

\begin{example}[Disk]\label{ex:disk1}
	Let $\disk$ be the unit disk in $\R^2$, equipped with the 
	Euclidean metric. Since the first co-homology group
	of $\disk$ is trivial, it holds that every closed 1--form is
	exact, and every co-closed 1--form is co-exact. Thus,
	$A_3^1(\disk)$, $A_4^1(\disk)$ and $A_5^1(\disk)$ are trivial, so
	all harmonic 1--fields are in $A_6^1(\disk)$. Thus, the complete
	Hodge decomposition for $\Omega^1(\disk)$ is
	\begin{equation}\label{eq:complete_hodgr_disk}
		\Omega^1(\disk) = A^1_1(\disk)\oplus A^1_2(\disk)\oplus A^1_6(\disk),
	\end{equation}
	where all the components are infinite dimensional.
\end{example}

\begin{example}[Annulus] \label{ex:annulus1}
	Let $\mathbb{A}$ be a standard annulus in $\R^2$, equipped with the 
	Euclidean metric. As for the disk, it holds that
	$A^1_3(\mathbb{A})$ is trivial (see Remark~\ref{rmk:trivialA3}). However, $A^1_4(\mathbb{A})$
	and $A^1_5(\mathbb{A})$ are not trivial. 
	Indeed, if $\alpha\in A^1_4(\mathbb{A})$, then
	$\alpha = \dd f$ where the function~$f$ fulfils
	$\delta\dd f = \Delta f = 0$. Also, since $\alpha$ is normal,
	it holds that $0 = i^*( \dd f) = \dd i^*(f)$, so $f$ is constant
	on each of its two connected components of the boundary.
	By uniqueness of the Laplace equation on $\mathbb{A}$
	it then holds that~$f$ is determined uniquely by the two boundary constants.
	Since the differential of a constant is zero, it is no restriction
	to assume that the constant at the outer boundary is zero,
	so we have that 
	$A^1_4(\mathbb{A}) = \{\dd f; \Delta f = 0, f|_{\pd_1 \mathbb{A}}=0, f|_{\pd_2\mathbb{A}}=\text{const} \}$.
	This space has dimension~1, and since $A^1_5(\mathbb{A}) = \star A^1_4(\mathbb{A})$ it also
	holds that $A^1_5(\mathbb{A})$ has dimension~1. Explicitly, in Cartesian coordinates, we have
	$A^1_4(\mathbb{A}) = \mathrm{span}\{ \alpha \}$, where $\alpha = \dd \ln(x^2+y^2)$,
	and $A^1_5(\mathbb{A}) = \mathrm{span}\{ \star\alpha \} = \mathrm{span}\{ (x\ud y - y \ud x)/(x^2+y^2) \}$.
	%
	%
	%
	Thus, the complete
	Hodge decomposition is
	\begin{equation}\label{eq:complete_hodgr_annulus}
		\Omega^1(\mathbb{A}) = A^1_1(\mathbb{A})\oplus A^1_2(\mathbb{A})
		\oplus A^1_4(\mathbb{A})\oplus A^1_5(\mathbb{A})\oplus A^1_6(\mathbb{A}),
	\end{equation}
	where $A_4^1(\mathbb{A})$ and $A_5^1(\mathbb{A})$ are 1--dimensional,
	and all the other components are infinite dimensional.
\end{example}

\newcommand{\T}{\mathbb{T}}

\begin{example}[Torus]
	Let $\T^2$ be the torus, equipped with the 
	metric inherited from~$\R^3$. Since $\T^2$
	is a closed manifold, $A_4^1(\T^2)$, $A_5^1(\T^2)$
	and $A_6^1(\T^2)$ are trivial.
	However, the first co-homology group
	is two dimensional, so $A_3^1(\T^2)$ is also
	two dimensional. Thus, the complete
	Hodge decomposition for $\Omega^1(\T^2)$ is
	\begin{equation}\label{eq:complete_hodgr_torus}
		\Omega^1(\T^2) = A^1_1(\T^2)\oplus A^1_2(\T^2)\oplus A^1_3(\T^2).
	\end{equation}
\end{example}

\begin{example}[Sphere]
	Let $S^2$ be the sphere, equipped with the 
	metric inherited from~$\R^3$. Since $S^2$
	is a closed manifold, $A_4^1(S^2)$, $A_5^1(S^2)$
	and $A_6^1(S^2)$ are trivial (see Remark~\ref{rmk:trivialA3}).
	Since the first co-homology group
	of $S^2$ is trivial, it holds that 
	$A_3^1(S^2)$ is trivial.
	Thus, the complete
	Hodge decomposition for $\Omega^1(S^2)$ is
	\begin{equation}\label{eq:complete_hodgr_sphere}
		\Omega^1(S^2) = A^1_1(S^2)\oplus A^1_2(S^2).
	\end{equation}
\end{example}

\section{Orthogonal decomposition of vector fields} 
\label{sec:orthogonal_decomposition_of_vector_fields}

The $L^{2}$~inner product on~$\Xcal(M)$ is given by
\begin{equation*}
	\pair{\xi,\eta}_{M} := \int_{M} \mathsf{g}(\xi,\eta)\vol
\end{equation*}
where $\vol$ is the volume form induced by~$\mathsf{g}$.
(Notice that we use the same notation as for the $L^{2}$~inner product on forms.)

In this section we show how the Hodge decomposition can be
used to obtain the $L^{2}$~orthogonal complement of the
Lie subalgebras of vector field discussed in Section~\ref{sec:lie_algebras_of_vector_fields} above. 
The approach is to find an isometry $\Xcal(M)\to\Omega^{1}(M)$
which maps the vector field subalgebra under study
onto one of the components in the Hodge decomposition.

For the first case of volume preserving vector fields is well known
that $\Xcal(M)$ can be orthogonally decomposed into divergence free
plus gradient vector fields. Using contraction with the metric
as an isometric isomorphism $\Xcal(M)\to\Omega^{1}(M)$,
this decomposition is expressed by the Hodge decomposition 
of~$1$--forms. (Equivalently, one may use contraction with the volume form instead,
which corresponds to the Hodge decomposition of ($n\!\!-\!\! 1$)--forms.)
The decomposition is essential in the derivation of the Euler equations for 
the motion of an ideal incompressible fluid. In this
case, the Lagrange multipliers in the projection has
a physical interpretation as the pressure in the fluid.


For the second case, it is well known that symplectic vector fields
can be identified with closed 1--forms, by contraction with the symplectic form.
Usually, flow equations
for symplectic vector fields are written in terms of the
Hamiltonian function, i.e., $\Xcal_{\omega}(M)$
is identified with~$\Fcal(M)$, 
and the equations are
expressed on $\Fcal(M)$ (an example is the equation for 
quasigeostrophic motion). This approach, corresponding to 
vorticity formulation in the case of the Euler fluid,
is viable in the setting of Hamiltonian
vector fields, i.e., those which have 
a globally defined Hamiltonian. However, on manifolds
which are not simply connected
(so that not every closed 1--form is exact), 
this
approach may not be feasible.
However, representation on the full space of vector fields,
using Lagrangian multipliers for orthogonal projection,
can always be used.

The third case of conformal vector fields is a new example.
For a manifold of dimension larger than 2, it follows from
a theorem by Liouville that the space of conformal vector fields
is finite dimensional. 
However, in the case of 2--manifolds, the space
of conformal vector fields can be (but does not have to be) 
infinite dimensional. The approach we follow
in this paper works in the case of flat 2--manifolds.
For closed manifolds,
this includes essentially only the 2--torus, for which
the set of conformal vector fields is finite dimensional.
However, in the case of a bounded domain of~$\R^{2}$,
the conformal vector fields correspond to all holomorphic
functions on this domain, and is thus infinite dimensional. The application
we have in mind is the derivation of Euler-Lagrange 
equations for variational problems on the space of conformal vector fields.
An example is geodesic motion on the infinite dimensional manifold
of conformal embeddings.

Each of the vector field subalgebras discussed in Section~\ref{sec:lie_algebras_of_vector_fields}
can be identified with one or several components in the complete Hodge decomposition.
However, the choice of isomorphism $\Xcal(M)\to\Omega^{1}(M)$
is different between the three basic cases of divergence free, symplectic, and
conformal vector fields. Once the isomorphism has been specified,
we use the short-hand notation $\xi\mapsto\xi^\flat$ 
for the map $\Xcal(M)\to\Omega^1(M)$
and $\alpha\mapsto\alpha^\sharp$
for its inverse.
Table~\ref{tab:decompositions} contains
an overview of the decompositions. Detailed expositions for each
case are given in the remaining part of this section.


\begin{table}[tp]%
	\centering%
	\addtolength{\tabcolsep}{0.01ex}
	\renewcommand{\arraystretch}{1.0}
	\smaller
	\begin{tabular}{ccl}
		\toprule 
		\textbf{Setting} & \textbf{Isomorphism} & \multicolumn{1}{l}{\textbf{Decompositions}} 
		\\ \midrule
			$M^{n},\mathsf{g}$ & 
			$\xi\mapsto\interior_\xi\mathsf{g}$ & 
			$
				\Xcal(M) = \!\!\!\underbrace{\Xcal_{\vol}(M)}_{
					\displaystyle
					\bigoplus_{l\neq 1} A^1_l(M)^\sharp
				}\!\!\!
				\oplus \, \underbrace{\grad(\Fcal_{0}(M))}_{
					\displaystyle
					A^1_1(M)^\sharp
				}
				%
			$
		\\ \cmidrule{3-3}
			& & 
			$\Xcal(M) = \!\!\!\!\! \underbrace{\Xcal_{\vol,\mathsf{t}}(M)}_{
					\displaystyle
					\bigoplus_{l \in \{2,3,5\}} \!\!\!\! A^1_l(M)^\sharp
				}
				\oplus\; \underbrace{\grad(\Fcal(M))}_{
					\displaystyle
					\bigoplus_{l \in \{1,4,6\}} \!\!\!\! A^1_l(M)^\sharp
				}
			$
		\\ \cmidrule{3-3}
			& & 
			$\Xcal(M) = \!\!\!\!\! \underbrace{\Xcal_{\vol}^{\mathrm{ex}}(M)}_{
				\displaystyle
				\bigoplus_{l \in \{2,5,6\}} \!\!\! A^1_l(M)^\sharp
			}
				\!\!\!\!\oplus\; \underbrace{\grad(\Fcal_{0}(M))}_{\displaystyle A^1_1(M)^\sharp}
				\oplus
				\underbrace{\mathcal{H}^1_\mathsf{n}(M)^\sharp}_{
					\displaystyle
					\bigoplus_{l \in \{3,4\}} \!\!\! A^1_l(M)^\sharp
				}
			$
		\\ \cmidrule{3-3}
			& & 
			$\Xcal(M) = \!\!\!\!\! \underbrace{\Xcal_{\vol,\mathsf{t}}^{\mathrm{ex}}(M)}_{
				\displaystyle
				\bigoplus_{l \in \{2,5\}} \!\! A^1_l(M)^\sharp
			}
				\!\oplus\; \underbrace{\grad(\Fcal(M))}_{\displaystyle
					\bigoplus_{l \in \{1,4,6\}} \!\!\! A^1_l(M)^\sharp
				}
				\oplus\;\;
				\underbrace{\mathcal{H}^1_0(M)^\sharp}_{
					\displaystyle
					A^1_3(M)^\sharp
				}
			$
		\\ \midrule 
			$\displaystyle \underset{(\text{almost Kähler})}{M^{2n},\mathsf{g},\omega}$ &
			$\xi\mapsto \interior_\xi\omega$ &
			$
				\Xcal(M) = \!\!\!\underbrace{\Xcal_{\omega}(M)}_{
					\displaystyle
					\bigoplus_{l\neq 2} A^1_l(M)^\sharp
				}\!\!\!
				\oplus \, \underbrace{\delta\Omega_\mathsf{t}^2(M)^\sharp}_{
					\displaystyle
					A^1_2(M)^\sharp
				}
			$
			\\ \cmidrule{3-3}
				& & 
				$\Xcal(M) = \!\!\!\!\! \underbrace{\Xcal_{\omega,\mathsf{t}}(M)}_{
						\displaystyle
						\bigoplus_{l \in \{1,3,4\}} \!\!\!\! A^1_l(M)^\sharp
					}
					\oplus\; \underbrace{\delta\Omega^{2}(M)^\sharp}_{
						\displaystyle
						\bigoplus_{l \in \{2,5,6\}} \!\!\!\! A^1_l(M)^\sharp
					}
				$
			\\ \cmidrule{3-3}
				& & 
				$\Xcal(M) = \!\!\!\!\! \underbrace{\Xcal_{\vol}^{\mathrm{ex}}(M)}_{
					\displaystyle
					\bigoplus_{l \in \{1,4,6\}} \!\!\! A^1_l(M)^\sharp
				}
					\!\!\!\!\oplus\; \underbrace{\delta\Omega_\mathsf{t}^2(M)^\sharp}_{
						\displaystyle A^1_2(M)^\sharp
					}
					\oplus
					\underbrace{\mathcal{H}^1_\mathsf{t}(M)^\sharp}_{
						\displaystyle
						\bigoplus_{l \in \{3,5\}} \!\!\! A^1_l(M)^\sharp
					}
				$
			\\ \cmidrule{3-3}
				& & 
				$\Xcal(M) = \!\!\!\!\! \underbrace{\Xcal_{\vol,\mathsf{t}}^{\mathrm{ex}}(M)}_{
					\displaystyle
					\bigoplus_{l \in \{1,4\}} \!\! A^1_l(M)^\sharp
				}
					\!\oplus\; \underbrace{\delta\Omega^{2}(M)^\sharp}_{\displaystyle
						\bigoplus_{l \in \{2,5,6\}} \!\!\! A^1_l(M)^\sharp
					}
					\oplus\;\;
					\underbrace{\mathcal{H}^1_0(M)^\sharp}_{
						\displaystyle
						A^1_3(M)^\sharp
					}
				$
		\\ \midrule 
			$\displaystyle \underset{(\text{flat})}{M^{2},\mathsf{g}}$ &
			$\xi\mapsto \interior_{\bar\xi}\mathsf{g}$ &
			$\Xcal(M) = \underbrace{\Xcalcon(M)}_{\displaystyle 
				\bigoplus_{l \neq \{1,2\}} \!\!\! A^1_l(M)^\sharp
			}
			\oplus \; \underbrace{\overline{\grad}(\Fcal_{0}(M))}_{\displaystyle
				A^1_1(M)^\sharp
			}
			\oplus \; \underbrace{\overline{\sgrad}(\Fcal_{0}(M))}_{\displaystyle
				A^1_2(M)^\sharp
			}
			$
		\\ \bottomrule
	\end{tabular}
	\caption{Various $L^{2}$~orthogonal decompositions of vector fields based on
	the Hodge decomposition. 
	The first column specifies the required setting.
	$M$ is a manifold, possibly with boundary, and
	the upper index denotes its dimension. 
	The second column specifies the
	isomorphism used to identify vector fields with 1--forms.
	The third column gives various $L^{2}$~orthogonal decompositions of
	$\Xcal(M)$, and corresponding 
	components in the Hodge decomposition (via the contraction map).
	The first component in each decomposition is
	a Lie subalgebra of vector fields.} \label{tab:decompositions}
\end{table}

\begin{remark}
	The requirement that the isomorphism $\Xcal(M)\to\Omega^{1}(M)$
	is an isometry
	can be weakened. Indeed, our basic requirement is that orthogonality
	is preserved, so it is enough that the map
	is conformal. However, in the examples in this paper
	the isomorphism will be an isometry.
\end{remark}

\OLD{

The \emph{flat map} $\Xcal(M)\to\Omega^1(M)$ is
defined by contraction with the 
metric: $\xi\mapsto \xi^\flat :=\interior_{\xi}\mathsf{g}$.
This is an isomorphism, and its inverse $\xi^\flat \mapsto (\xi^\flat)^\sharp = \xi$ is called 
the \emph{sharp map}. As expected, 
it holds that $\Xcal_{\mathsf{t}}(M)^\flat = \Omega^1_{\mathsf{t}}(M)$.
There is also an isomorphism $\Xcal(M)\to\Omega^{n-1}(M)$ given
by contraction with the volume form $\xi\mapsto \interior_{\xi}\vol$,
where~$\vol$ denotes the volume form induced by the metric.
It holds that $\interior_{\xi}\vol = \star\xi^\flat$. Thus,
$\interior_{\Xcal_{\mathsf{t}}(M)}\vol = \star \Xcal_{\mathsf{t}}(M)^\flat = 
\star\Omega^1_{\mathsf{t}}(M) = \Omega^{n-1}_{\mathsf{n}}(M)$.

Both the flat map and contraction with the volume form are isometries.
Indeed, it holds that 
$\mathsf{g}(\xi,\eta)\vol = \xi^\flat\wedge\star\eta^\flat = 
\interior_{\xi}\vol\wedge\star\interior_{\eta}\vol$. Thus, the Hodge
decomposition on $\Omega^1(M)$ (or equivalently on $\Omega^{n-1}(M)$) 
yields a corresponding
$L^2$~orthogonal decomposition of~$\Xcal(M)$ as
$\grad(\Fcal_0(M))\oplus(\delta\Omega_{\mathsf{n}}^2(M)\oplus \mathcal{H}^1(M))^\sharp$, 
where $\grad:\Fcal(M)\to\Xcal(M)$ is defined
by $F\mapsto (\dd F)^\sharp$. It holds that 
$\dd\Omega^{n-2}_{\mathsf{t}}(M)\oplus\mathcal{H}^n(M)$ is the kernel
of $\dd:\Omega^{n-1}(M)\to\Omega^n(M)$. Indeed, if
$\alpha\in\Omega^{n-1}(M)$ then
$\alpha = \delta\omega + \beta$, with $\delta\omega\in\delta\Omega^{n}_0(M)$
and $\beta\in\dd\Omega^{n-2}_{\mathsf{t}}(M)\oplus\mathcal{H}^n(M)$
the unique components in the Hodge decomposition. 
Assume now that $\dd\alpha = 0$. Then
$0=\dd\delta\omega + \dd\beta = \dd\delta\omega$.
Thus, $\delta\omega$ is both closed and co-closed, 
so $\delta\omega\in\mathcal{H}^{n-1}(M)$, which implies
$\delta\omega=0$ since $\delta\Omega^n_0(M)\cap\mathcal{H}^{n-1}(M)= \{ 0\}$.
In summary, we get that $\Xcal(M) = \grad(\Fcal_0(M))\oplus\Xcalvol(M)$,
where $\Xcalvol(M) = \{\xi\in\Xcal(M); \divv(\xi):=\delta\xi^\flat = 0 \}$
are the volume preserving vector fields (since $\LieD{\xi}\vol = \divv(\xi)\vol$).

This decomposition is important in the derivation of
the Euler fluid equations, i.e., the reduced geodesic equation
on the group of volume preserving diffeomorphisms.
From this point of view, the term in the Euler fluid equations
involving the gradient of the pressure
is the ``projection term'' which asserts that
the flow is volume preserving. In the next section we derive
a different decomposition which, instead of the divergence free
vector fields, involves the conformal vector fields.
In Section~\ref{sec:geodesic_equation} this decomposition
is then used to obtain the projection terms 
in the strong formulation of a
reduced geodesic equation for conformal planar embeddings of the disk.
}

\subsection{Volume preserving vector fields on a Riemannian manifold} 
\label{sub:volume_preserving_case}

Let $(M,\mathsf{g})$ be a compact Riemannian manifold.
As isomorphism we use contraction with the metric, i.e., 
$\xi\mapsto\interior_\xi\mathsf{g}=:\xi^\flat$.

As before, let $\vol$ denote the volume form induces by the
Riemannian metric~$\mathsf{g}$.
Since $\xi$ is divergence free if and only
if $\interior_{\xi}\vol$ is closed, it follows from the formula
$\interior_{\xi}\vol = \star\xi^\flat$ that
$\xi$ is divergence free if and only if $\xi^\flat$
is co-closed, i.e., $\delta\xi^\flat = 0$.
Furthermore, $\xi\in\Xcal_{\vol}^\mathrm{ex}(M)$ if and only if
$\xi^\flat$ is co-exact, i.e., $\xi^\flat = \delta\alpha$
for some $\alpha\in\Omega^2(M)$. Thus, $\Xcalvol(M)^\flat = \ker \delta$
and $\Xcal_\vol^\mathrm{ex}(M)^\flat = \image \delta$,
which gives the first and the third decompositions in Table~\ref{tab:decompositions}.

For the case of tangential vector fields, we notice that
$\xi$ is a tangential vector field if and only if $\xi^\flat$ is
a tangential 1--form. Since elements in $A^1_4(M)$ and $A^1_6(M)$
are necessarily non-tangential, we obtain the second and the fourth
decompositions in Table~\ref{tab:decompositions}.

\subsection{Symplectic vector fields on an almost Kähler manifold} 
\label{sub:kahler_case}

Let $(M,\mathsf{g},\omega)$ be an almost Kähler manifold.
As isomorphism we use contraction with the symplectic form,
i.e., $\xi\mapsto\interior_\xi\omega=:\xi^\flat$. Due to the
almost Kähler structure, this isomorphism is isometric.

Since $\LieD_\xi\omega = \dd\interior_\xi\omega = \dd\xi^\flat$
it follows that $\xi$ is symplectic if and only if
$\xi^\flat$ is closed. Likewise, $\xi$ is Hamiltonian
if and only if $\xi^\flat$ is exact. Thus,
$\Xcal_\omega(M)^\flat = \ker\dd$ and $\Xcal_\mathrm{Ham}(M)^\flat = \image\dd$.
This yields the first and the third symplectic decompositions in
Table~\ref{tab:decompositions}.

For the case of tangential symplectic and Hamiltonian vector fields,
we notice that $\xi$ is a tangential vector field if and only if
$\xi^\flat$ is a normal 1--form. Since elements in $A^1_5(M)$ and $A^1_6(M)$
are necessarily non-tangential, we obtain the second and the fourth
symplectic decompositions in Table~\ref{tab:decompositions}.


\subsection{Conformal vector fields on a 2--manifold} 
\label{sub:new_example_conformal_case}

Let $(M,\mathsf{g})$ be a compact Riemannian 2--manifold, possibly with boundary.
If $(M,\mathsf{g})$ is flat, then there exists an orthogonal reflection map
$R\in\mathcal{T}_{1}^{1}(M)$, i.e.,
an orientation reversing isometry such that $R^{2}= \Id$. In that case, we may chose
coordinate charts such that
$\mathsf{g} = \dd x\otimes \dd x + \dd y\otimes \dd y$ and 
$R = \dd x\otimes \pd_{x} - \dd y \otimes \pd_{y}$.
If $\xi\in\Xcal(M)$ then we write $\bar\xi := R\xi$.
%
%
Since $R$ is an isometry, it holds that the isomorphism
\begin{equation*}
	\Xcal(M)\ni 
		\xi \mapsto \interior_{\bar\xi}\mathsf{g} =: \xi^\flat
	\in \Omega^{1}(M)
\end{equation*}
is isometric with respect to the $L^{2}$~inner 
products on $\Xcal(M)$ and $\Omega^{1}(M)$.
The following lemma is the key to obtaining the $L^{2}$~orthogonal
complement of~$\Xcalcon(M)$ in~$\Xcal(M)$.

\begin{lemma}\label{lem:con_to_harmonic}
	If $(M,\mathsf{g})$ is a flat 2--manifold
	then $\Xcalcon(M)^{\flat} = \mathcal{H}^{1}(M)$.
\end{lemma}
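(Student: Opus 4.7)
The plan is to reduce the claim to a local coordinate computation using the definition of $R$ and the Cauchy--Riemann description of conformal vector fields established in equation~\eqref{eq:coordinate_conformal_conditions}.

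First, I would cover $M$ by flat coordinate charts $(x,y)$ so that $\mathsf{g} = \dd x\otimes\dd x + \dd y\otimes\dd y$ and $R = \dd x\otimes\partial_x - \dd y\otimes\partial_y$. For $\xi = u\partial_x + v\partial_y$ this immediately gives $\bar\xi = u\partial_x - v\partial_y$ and therefore
\begin{equation*}
    \xi^\flat = \interior_{\bar\xi}\mathsf{g} = u\,\dd x - v\,\dd y.
\end{equation*}
The whole computation now reduces to applying $\dd$ and $\delta$ to this explicit 1--form.

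The key step is to compute $\dd\xi^\flat$ and $\delta\xi^\flat$ in these coordinates. A short calculation gives
\begin{equation*}
    \dd\xi^\flat = -\Bigl(\tfrac{\pd u}{\pd y}+\tfrac{\pd v}{\pd x}\Bigr)\dd x\wedge\dd y,
\end{equation*}
and, using $\delta = \star\dd\star$ in dimension $2$ together with $\star(u\,\dd x - v\,\dd y) = v\,\dd x + u\,\dd y$, one obtains
\begin{equation*}
    \delta\xi^\flat = \tfrac{\pd u}{\pd x} - \tfrac{\pd v}{\pd y}.
\end{equation*}
Thus the harmonicity conditions $\dd\xi^\flat = 0$ and $\delta\xi^\flat = 0$ are \emph{exactly} the two Cauchy--Riemann relations listed in~\eqref{eq:coordinate_conformal_conditions} for $n=2$. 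By the discussion following~\eqref{eq:coordinate_conformal_conditions}, this system is equivalent to $\xi\in\Xcalcon(M)$, so $\xi$ is conformal if and only if $\xi^\flat\in\mathcal{H}^1(M)$.

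Since the equivalence is pointwise, it is chart--independent, and holds globally on~$M$. I would close the argument by noting that $\xi\mapsto\xi^\flat$ is a bijection $\Xcal(M)\to\Omega^1(M)$, so the local equivalence upgrades to the claimed equality $\Xcalcon(M)^\flat = \mathcal{H}^1(M)$. The only subtle point I anticipate is bookkeeping of Hodge star sign conventions and the factor $(-1)^{(n-k+1)k}$ in the definition of $\delta$; these matter for ensuring that the two equations one gets really are the Cauchy--Riemann system (not, say, a version with the wrong relative sign), but they are straightforward to verify.
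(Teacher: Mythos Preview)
Your proposal is correct and essentially identical to the paper's own proof: both pass to flat local coordinates, write $\xi^\flat = u\,\dd x - v\,\dd y$, and compute $\dd\xi^\flat$ and $\delta\xi^\flat$ directly to recover the Cauchy--Riemann equations. The only difference is that you spell out the chart--independence and bijectivity remarks explicitly, which the paper leaves implicit.
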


\begin{proof}
	It is enough to prove the assertion in local coordinates
	as above. Let $\xi = u\pd_x + v\pd_y \in \Xcalcon(M)$.
	Then $(u,v)$ must fulfil the Cauchy--Riemann equations.
	
	It holds that $\xi^\flat = u\ud x - v \ud y$.
	Thus, 
	\begin{equation*}
		\ud\xi^\flat = u_y \ud y\wedge\dd x - v_x \ud x\wedge\dd y = -(v_x + u_y)\ud x\wedge\dd y
	\end{equation*}
	and
	\begin{equation*}
		\delta\xi^\flat = \star \ud \star \xi^\flat = \star \ud (v\ud x + u\ud y)
		= \star (u_x - v_y)\ud x\wedge\dd y = u_x - v_y .
	\end{equation*}
	Hence, we see that $\dd\xi^\flat = 0$ and $\delta\xi^\flat = 0$ if and only
	if $(u,v)$ fulfils the Cauchy-Riemann equations. This proves the assertion.
\end{proof}

We now introduce ``reflected'' versions of the gradient and the skew gradient.
Indeed, for a function $F\in\Fcal(M)$ 
we define the \emph{reflection gradient} as $\overline{\grad}(F) = (\dd F)^\sharp$
and the \emph{reflection skew gradient} as $\overline{\sgrad}(F) = (\star\dd F)^\sharp$.
%
%
In local (flat) coordinates we have
\begin{equation*}
	\overline{\grad}(F) = F_{x}\pd_{x} - F_{y}\pd_{y},
	\qquad
	\overline{\sgrad}(F) = F_{y}\pd_{x} + F_{x}\pd_{y} .
\end{equation*}

%
%
%
%
%
%
%
%
%

Let $\Fcal_{0}(M) = \{ F\in\Fcal(M); F|_{\pd M} = 0 \}$, i.e., 
the functions that vanish on the boundary.
Using Lemma~\ref{lem:con_to_harmonic} 
together with the Hodge decomposition Theorem~\ref{thm:Hodge_boundary_general}
we obtain the following result.

\begin{theorem}\label{thm:conformal_decomposition}
	If $(M,\mathsf{g})$ is a flat 2--manifold, then
	the space of vector fields on $M$ admits the 
	$L^2$~orthogonal decomposition
	\[
		\Xcal(M) = \Xcal_\con(M)\oplus\overline{\grad}(\Fcal_0(M))\oplus
			\overline{\sgrad}(\Fcal_0(M))
	\]
	where each component is closed in $\Xcal(M)$ with respect to the
	Fréchet topology.
\end{theorem}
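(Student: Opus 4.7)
The plan is to transport the Hodge decomposition of Theorem~\ref{thm:Hodge_boundary_general} from $\Omega^1(M)$ back to $\Xcal(M)$ via the isomorphism $\xi\mapsto \xi^\flat := \interior_{\bar\xi}\mathsf{g}$ introduced before Lemma~\ref{lem:con_to_harmonic}. Because this map is an $L^2$-isometry (as $R$ is an isometry and metric contraction is pointwise an isometry) and at the same time a smooth linear homeomorphism of Fréchet spaces $\Xcal(M)\to\Omega^1(M)$, it carries $L^2$-orthogonal topologically closed decompositions to $L^2$-orthogonal topologically closed decompositions. Applying Theorem~\ref{thm:Hodge_boundary_general} with $k=1$ gives
\[
\Omega^1(M) = \dd\Omega^0_{\mathsf{n}}(M)\oplus\delta\Omega^2_{\mathsf{t}}(M)\oplus\mathcal{H}^1(M),
\]
and it remains to identify each summand, after applying the sharp map, with the corresponding summand stated in the theorem.

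For the harmonic component, the identification $\mathcal{H}^1(M)^\sharp = \Xcal_\con(M)$ is exactly Lemma~\ref{lem:con_to_harmonic}. For the exact component, $\Omega^0_{\mathsf{n}}(M) = \Fcal_0(M)$ by definition of normal $0$-forms, and by definition of the reflection gradient $\overline{\grad}(F) = (\dd F)^\sharp$, so
\[
\dd\Omega^0_{\mathsf{n}}(M)^\sharp = \{(\dd F)^\sharp; F\in\Fcal_0(M)\} = \overline{\grad}(\Fcal_0(M)).
\]

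For the co-exact component, I would use that on a $2$-manifold every element of $\Omega^2(M)$ has the form $F\vol$ with $F\in\Fcal(M)$, and the condition $i^*(\star(F\vol)) = i^*(F) = 0$ defining $\Omega^2_{\mathsf{t}}(M)$ reduces exactly to $F|_{\pd M}=0$, i.e. $F\in\Fcal_0(M)$. A short computation using the formula $\delta = (-1)^{(n-k+1)k}\star\dd\star$ for $n=k=2$ gives $\delta(F\vol) = \star\dd F$, so $\delta\Omega^2_{\mathsf{t}}(M) = \star\dd\Fcal_0(M)$, whence by the definition $\overline{\sgrad}(F)=(\star\dd F)^\sharp$ we obtain $\delta\Omega^2_{\mathsf{t}}(M)^\sharp = \overline{\sgrad}(\Fcal_0(M))$.

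There is no serious obstacle here: the decomposition, the orthogonality, and the Fréchet-closedness of the three summands are immediate once one accepts that $\flat$ is an isometric homeomorphism, and the three set-theoretic identifications above are essentially definitional unwindings together with the already-established Lemma~\ref{lem:con_to_harmonic}. The only point requiring attention is bookkeeping the signs from the reflection~$R$ inside the definition of $\flat$ when checking that $(\dd F)^\sharp$ and $(\star\dd F)^\sharp$ really coincide with the coordinate expressions used to define $\overline{\grad}$ and $\overline{\sgrad}$, but this is routine in the flat local coordinates used throughout Section~\ref{sub:conformal_vector_fields}.
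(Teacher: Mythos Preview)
Your proposal is correct and follows essentially the same route as the paper: transport the Hodge decomposition through the isometric Fréchet isomorphism $\flat$, invoke Lemma~\ref{lem:con_to_harmonic} for the harmonic summand, and identify the exact and co-exact summands by unwinding the definitions of $\overline{\grad}$, $\overline{\sgrad}$, $\Omega^0_{\mathsf{n}}(M)$ and $\Omega^2_{\mathsf{t}}(M)$. The only cosmetic difference is that for the co-exact piece the paper passes through the identity $\Omega^0_{\mathsf{n}}(M)=\star\Omega^2_{\mathsf{t}}(M)$ rather than writing a general $2$-form as $F\vol$, but the two computations are equivalent.
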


\begin{proof}
	First, we need to verify the following diagram:
	\[
	\xymatrix@1@R=5ex@C=0ex{
		\overline{\grad}(\Fcal_0(M))\ar[d]_{\flat} &\oplus&
		\overline{\sgrad}(\Fcal_0(M))\ar[d]_{\flat} &\oplus&
		\Xcalcon(M)\ar[d]_{\flat}
		\\
		\dd\Omega^0_\mathsf{n}(M) &\oplus&
		\delta\Omega^2_\mathsf{t}(M) &\oplus&
		\mathcal{H}^1(M)
	}
	\]
	It follows from the definition of normal forms that $\Omega^{0}_{\mathsf{n}}(M) = \Fcal_{0}(M)$.
	Thus, 
	\begin{equation*}
		\overline{\grad}(\Fcal_{0}(M))^{\flat} = \dd\Fcal_{0}(M) = \dd \Omega^{0}_{\mathsf{n}}(M).
		%
		%
	\end{equation*}
	Also, it holds that $\Omega^{0}_{\mathsf{n}}(M) = \star\Omega^{2}_{\mathsf{t}}(M)$, which gives
	\begin{equation*}
		\overline{\sgrad}(\Fcal_{0}(M))^{\flat} = 
		\star\dd\Omega^{0}_{\mathsf{n}}(M)
		= -\delta\Omega^{2}_{\mathsf{t}}(M)= \delta\Omega^{2}_{\mathsf{t}}(M) .
	\end{equation*}
	Next, it follows from Lemma~\ref{lem:con_to_harmonic} that $\Xcalcon(M)^{\flat} = \mathcal{H}^{1}(M)$.
	Now, from the Hodge decomposition of forms (Theorem~\ref{thm:Hodge_boundary_general})
	it follows that the subspaces of forms are $L^{2}$~orthogonal to each other.
	This implies that the subspaces
	of vector fields are also $L^{2}$~orthogonal to each other,
	since the isomorphism $\xi\to\xi^\flat$ is an isometry.
	Finally, since the isomorphism~$\flat:\Xcal(M)\to\Omega^{1}(M)$
	is a continuous (even smooth) vector space isomorphism with 
	respect to the Fréchet topologies on $\Xcal(M)$ and $\Omega^{1}(M)$,
	and since the subspaces of forms are topologically closed in~$\Omega^{1}(M)$,
	it follows that the vector field subspaces are
	topologically closed in~$\Xcal(M)$.
\end{proof}

\begin{example}[Disk]
	Let $M=\disk$ and let $(x,y)$ be Cartesian coordinates.
	If $\xi = u \pd_x + v \pd_y$, we take as 
	reflection map $R(\xi) = u\pd_x - v\pd_y$ (corresponding to
	complex conjugation of $z = x + \i y$). It follows from equation~\eqref{eq:complete_hodgr_disk}
	that $\mathcal{H}^1(\disk) = A^1_6(\disk)$.
	Thus, by Theorem~\ref{thm:conformal_decomposition} we get that
	the set of conformal vector fields on $\disk$ (corresponding to holomorphic functions on $\disk$)
	is isometrically isomorphic to the set of simultaneously exact and co-exact 1--forms.
\end{example}

\begin{example}[Annulus]
	Let $M=\mathbb{A}$ and let $(x,y)$ be Cartesian coordinates.
	We use the same reflection map as for the disk.
	It follows from equation~\eqref{eq:complete_hodgr_annulus}
	that $\mathcal{H}^1(\disk) = A^1_4(\disk)\oplus A^1_5(\disk) \oplus A^1_6(\disk)$.
	The special harmonic fields $\alpha$ and $\star\alpha$ (see Example~\ref{ex:annulus1}) corresponds to
	the holomorphic functions~$\i/z$ and~$1/z$.
	In complex analysis it is well known that these functions have a special role
	(e.g., in the calculus of residues).
	%
\end{example}

\begin{example}[Torus]
	Let $M=\mathbb{T}$ and let $(\theta,\phi)$ be the standard angle coordinates.
	(Notice that $\theta,\phi$ are not smooth functions on $\mathbb{T}$, but $\dd\theta$
	and $\dd\phi$ are well defined smooth 1--forms.)
	We use the reflection map $(\pd_\theta,\pd_\phi)\mapsto (\pd_\theta,-\pd_\phi)$.
	It follows from equation~\eqref{eq:complete_hodgr_torus} and Theorem~\ref{thm:conformal_decomposition}
	that the set of conformal vector fields is only two dimensional, generated by
	the pure translations $\pd_\theta$ and $\pd_\phi$.
	%
	%
\end{example}

\begin{counterexample}[Sphere]
	It follows from equation~\eqref{eq:complete_hodgr_sphere} that the space
	of harmonic fields on the sphere is trivial. However, the space
	of conformal vector fields of the sphere corresponds to the Möbius 
	transformations (by identifying the plane with the Riemann sphere)
	and is thus six dimensional, see e.g.~\cite{Sh1997}. Thus, it is \emph{not} possible to identify
	the space of conformal vector fields with the harmonic 1--fields in this
	case. The reason is that $S^2$ is not flat, so Theorem~\ref{thm:conformal_decomposition}
	does not apply.
\end{counterexample}

%
%
%

\OLD{

Let $\U\subset\R^2$ be a simply connected compact subset
with non-empty interior and smooth boundary, and let 
$\mathsf{g}=\dd x\otimes\dd x + \dd y\otimes\dd y$ be the the Euclidean metric.
Then $(\U,\mathsf{g})$
is a compact Riemannian manifold with boundary.
In fact, identifying $\R^2$ with $\C$, it holds
that $(\U,\mathbb{J},\mathsf{g})$ is a compact Kähler manifold,
where $\mathbb{J}:T\U \to T\U$ is the fiber preserving map
corresponding to rotation of vectors $\pi/2$ radians in positively oriented direction.
The space of holomorphic vector fields is given by
\[
	\Xcal_\hol(\U) = \{ \xi \in\Xcal(\U) ; \xi=u \frac{\pd}{\pd x} + v \frac{\pd}{\pd y}, 
	\frac{\pd u}{\pd x}=\frac{\pd v}{\pd y},
		\frac{\pd u}{\pd y}=-\frac{\pd v}{\pd x} \} ,
\]
that is, vector fields whose components in $(x,y)$--coordinates 
fulfill the Cauchy--Riemann equations. The following known result
gives a geometric description of holomorphic vector fields.

\begin{lemma}\label{lem:holomorphic_eq_conformal}
	It holds that $\Xcal_\hol(\U) = \Xcal_\con(\U)=\Xcal_{1,\divv}(\U)$.
\end{lemma}

\begin{proof}
	The result follows from the calculations in 
	Lemma~\ref{lem:c_divv_eq_conformal_for_flat}.
\end{proof}



Our aim is to find the $L^2$~orthogonal complement to $\Xcal_\con(\U)$ inside
$\Xcal(\U)$ using the Hodge decomposition theorem~\ref{thm:Hodge_boundary_general}
for $\Omega^1(\U)$.
To this extent, we introduce the pseudo-Riemannian metric 
$\mathsf{r}=\dd x\otimes\dd x - \dd y\otimes\dd y$.
Let $\Acal_\mathsf{r}:\Xcal(\U)\to\Omega^1(\U)$ denote the
linear isomorphism defined by contraction with~$\mathsf{r}$, i.e.,
$\Acal_\mathsf{r}\xi := \interior{\xi}\mathsf{r}$. (Notice that
$\Acal_\mathsf{r}$ is the Legendre transformation corresponding to the 
Lagrangian $L(q,\dot q) = \mathsf{r}_q(\dot q,\dot q)/2$, i.e., it is
an \emph{inertia operator}.) Just as the flat map 
(corresponding to the metric $\mathsf{g}$)
induces the operators $\grad$,~$\divv$ and~$\curl$, the
map $\Acal_\mathsf{r}$ induces ``$\mathsf{r}$--versions''
of these operators by $\grad_\mathsf{r}=\Acal_\mathsf{r}^{-1}\circ \dd$,
$\divv_\mathsf{r}=\delta\circ\Acal_\mathsf{r}$ and
$\curl_\mathsf{r}=\star\circ\ud\circ\Acal_\mathsf{r}$.

%
The first result of ours is the following.

\begin{lemma}\label{lem:relation_conformal_harmonic}
	It holds that $\Acal_\mathsf{r}\Xcal_\con(\U) = \mathcal{H}^1(\U)$.
\end{lemma}

\begin{proof}
	Let $\xi = u \frac{\pd}{\pd x} + v \frac{\pd}{\pd y}$. Then
	$\mu:=\Acal_\mathsf{r}\xi = u \ud x - v \ud y$. Now,
	\[
		\begin{split}
			\dd\mu &= \dd u \wedge \dd x - \dd v \wedge \dd y = 
			\Big(-\frac{\pd u}{\pd y}-\frac{\pd v}{\pd x}\Big)\ud x\wedge\dd y
			\\
			\delta\mu &= \star\dd (u \ud y + v \ud x) = 
			\star \Big(\frac{\pd u}{\pd x}-\frac{\pd v}{\pd y}\Big)\ud x\wedge\dd y = 
			\frac{\pd u}{\pd x} - \frac{\pd v}{\pd y} .
		\end{split}
	\]
	Thus, $\dd\mu =0$ and $\delta\mu=0$, i.e., $\mu\in\mathcal{H}^1(\U)$, 
	if and only if $u,v$ fulfills the Cauchy-Riemann equations,
	i.e., $\xi\in\Xcal_\hol(\U)$. 
	The result now follows from Lemma~\ref{lem:holomorphic_eq_conformal}.
\end{proof}

Notice that it follows from these calculations that the Cauchy-Riemann equations
can be written $\curl_\mathsf{r}(\xi)=0$ and $\divv_\mathsf{r}(\xi)=0$.

Next, in order to find the $L^2$~orthogonal complement of the conformal vector fields
we also need to following result.

\begin{lemma}\label{lem:Ar_is_isometry}
	The map $\Acal_\mathsf{r}:\Xcal(\U)\to\Omega^1(\U)$ is
	an isometry with respect to the $L^2$~inner products.
\end{lemma}

\begin{proof}
	Follows since $\mathsf{g}(u\pd_x + v\pd_y,p\pd_x + q\pd y) = up + vq = \mathsf{g}(u\pd_x-v\pd_y,p\pd_x-q\pd_y)$.
\end{proof}

We are now ready to give the main result in this section.

\begin{theorem}\label{thm:orthogonal_decomposition_of_planar_vectorfields}
	The space of vector fields on $\U$ admits the 
	$L^2$~orthogonal decomposition
	\[
		\Xcal(\U) = \grad_\mathsf{r}(\Fcal_0(\U))\oplus
			\mathbb{J}\circ\grad_\mathsf{r}(\Fcal_0(\U))\oplus
			\Xcal_\con(\U) ,
	\]
	where each component is topologically closed in $\Xcal(\U)$.
\end{theorem}

\begin{proof}
	We need to validate the following diagram:
	\[
	\xymatrix@R=7ex@C=0ex{
		\Xcal(\U)\ar[d]_{\Acal_\mathsf{r}} &=& 
		\grad_\mathsf{r}(\Fcal_0(\U))\ar[d]_{\Acal_\mathsf{r}} &\oplus&
		\mathbb{J}\circ\grad_\mathsf{r}(\Fcal_0(\U))\ar[d]_{\Acal_\mathsf{r}} &\oplus&
		\Xcal_\con(\U)\ar[d]_{\Acal_\mathsf{r}}
		\\
		\Omega^1(\U) &=&
		\dd\Omega^0_0(\U) &\oplus&
		\delta\Omega^2_0(\U) &\oplus&
		\mathcal{H}^1(\U)
	}
	\]
	The lower equality follows from the Hodge decomposition theorem~\ref{thm:Hodge_boundary_general}.
	Since $\Acal_\mathsf{r}$ is an isometric isomorphism, and since the Hodge decomposition is
	orthogonal, it follows that
	\[
		\Acal_\mathsf{r}^{-1}\dd\Omega^0_0(\U)\oplus
		\Acal_\mathsf{r}^{-1}\delta\Omega^2_0(\U)\oplus
		\Acal_\mathsf{r}^{-1}\mathcal{H}^1(\U)
	\]
	is an $L^2$~orthogonal decomposition of $\Xcal(\U)$.
	From the definition of $\grad_\mathsf{r}$ we get 
	$\grad_\mathsf{r}(\Fcal_0(\U))=\Acal_\mathsf{r}^{-1}\dd\Omega^0_0(\U)$.
	From $\delta\Omega^2_0(\U) = \star\ud\Omega^0_0(\U)=
	\star\Acal_\mathsf{r}\grad_\mathsf{r}(\Fcal_0(\U))$ and
	the observation $\Acal_\mathsf{r}^{-1}\star\Acal_\mathsf{r} = -\mathbb{J}$
	we get $\mathbb{J}\circ\grad_\mathsf{r}(\Fcal_0(\U)) = \Acal_\mathsf{r}^{-1}\delta\Omega^2_0(\U)$.
	$\Acal_\mathsf{r}^{-1}\Xcal_\con(\U) = \mathcal{H}^1(\U)$ follows from
	Lemma~\ref{lem:relation_conformal_harmonic}.
\end{proof}

}



\section{Application to conformal variational problems} 
\label{sec:application_examples}

In this section we show how the orthogonal decomposition of conformal
vector fields can be used to derive differential equations for
variational problems involving conformal vector fields
on a bounded domain~$U\subset \R^{2}$ (or equivalently, variational
problems involving holomorphic functions on a bounded complex domain~$U$).
Throughout this section we identify complex valued functions on~$U$ with
vector fields on~$U$.

%

\subsection{Computing the conformal projection} 
\label{sub:computing_the_conformal_projection}

Given a vector field $\xi\in\Xcal(U)$, 
there is a direct way to compute the orthogonal
projection $\mathrm{Pr}_\con:\Xcal(U)\to\Xcalcon(U)$,
which does not involve solving a set of partial differential equations.
The approach is to use the \emph{Bergman kernel}~\cite{DuSc2004},
i.e., the reproducing kernel~$K_U(z,\cdot)$ of the \emph{Bergman space}~$A^2(U)$,
which is the Hilbert space obtained by completion of $\Xcalcon(U)$
with respect to the $L^2$~inner product. For any $f\in A^2(U)$
it then holds that $f(z) = \inner{f,K_U(z,\cdot)}_U$,
where $\inner{f,g}_U := \int_U f\bar g \ud A$ is the complex
$L^2$~inner product. The existence of $K_U(z,\cdot)$ follows from Riesz 
representation theorem since point-wise evaluation of functions in~$A^2(U)$ 
is continuous in the $L^2$~topology.
Now, for a general complex valued function $f\in L^2(U)$,
its orthogonal projection to $A^2(U)$ is given by $f_\con(z) = \inner{f,K_U(z,\cdot)}_U$.
Since $\xi\in\Xcal(U)$ implies that $\mathrm{Pr}_\con(\xi)\in\Xcalcon(U)$
and since $\Xcalcon(U)\subset A^2(U)$
it must hold that $\mathrm{Pr}_\con(\xi) = \inner{\xi,K_U(z,\cdot)}_U$.

For the case of the unit disk, it holds that $K_\disk(z,\zeta) = \frac{1}{\pi}\frac{1}{(1-\bar{z} \zeta)^2}$.
For other domains, the kernel function is given by $K_U(z,\zeta) =
K_\disk(\varphi(z),\varphi(\zeta))\varphi'(\zeta)\overline{\varphi'(z)}$,
where $\varphi$ is a conformal mapping $U\to \disk$.

A basis for $\Xcal(\disk)$ is given by $\{ z^m \bar z^n \}_{m,n\geq 0}$.
Thus, if $\xi\in\Xcal(\disk)$ is expanded in this basis, we may compute
$\mathrm{Pr}_\con(\xi)$ by applying $\mathrm{Pr}_\con$ to each of the basis
elements. Indeed, if $e_{mn}(z) = z^m\bar z^n$ then
\begin{equation*}
	\begin{split}
		\mathrm{Pr}_\con(e_{mn})(z) &= \int_\disk \frac{1}{\pi}\frac{1}{(1-z\bar\zeta)^2}\zeta^m\bar\zeta^n\ud A(\zeta)
		\\
		&= \int_\disk \frac{1}{\pi}\sum_{p=1}^{\infty} z^p\bar\zeta^p\zeta^m\bar\zeta^n \ud A(\zeta)
		= \left\{
		\begin{matrix}
			\frac{m-n+1}{m+1}z^{m-n} & m \geq n \phantom{.} \\
			0 & m < n.
		\end{matrix}
		\right.
	\end{split}
\end{equation*}

Another result that may be useful is the following.

\def\eqalign#1{\null\,\vcenter{\openup\jot \mathsurround=0pt \ialign{\strut
     \hfil$\displaystyle{##}$&$ \displaystyle{{}##}$\hfil \crcr#1\crcr}}\,}
\def\phi{\varphi}
\def\D{\mathbb D}
\def\C{\mathbb C}
\def\PrCon{\mathrm{Pr}_{\con}}
\def\X{\mathfrak X}
\def\dlangle{\langle\!\langle}
\def\drangle{\rangle\!\rangle}
\def\e{{\rm e}}
\def\i{{\rm i}}
\def\d{{\rm d}}
\def\R{{\mathbb R}}

\begin{proposition}\label{pro:projection_property}
	Let $\xi\in\X(U)$ and $\psi\in\Xcalcon(U)$. If $\psi(z)\ne 0$ for all $z\in U$ then $\PrCon(\xi)=0$ if
	and only if $\PrCon(\bar \psi \xi)=0$.
\end{proposition}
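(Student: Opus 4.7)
The plan is to reduce both projection conditions to orthogonality statements in $L^2(U)$ and then exploit that multiplication by a non-vanishing holomorphic function is a bijection of $\Xcalcon(U)$. No Bergman-kernel manipulation or PDE estimate is needed.

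First I would rewrite the projection in terms of its defining orthogonality: for any $\eta \in \Xcal(U)$, $\PrCon(\eta)=0$ if and only if $\inner{\eta,\phi}_U = 0$ for every $\phi \in \Xcalcon(U)$. Setting $\eta = \bar\psi\xi$ and using the pointwise identity $\bar\psi\,\xi\,\bar\phi = \xi\,\overline{\psi\phi}$ under the integral gives the key reformulation
\begin{equation*}
\inner{\bar\psi\xi,\phi}_U \;=\; \inner{\xi,\psi\phi}_U,
\end{equation*}
so $\PrCon(\bar\psi\xi)=0$ if and only if $\xi$ is $L^2$-orthogonal to the subspace $\psi\cdot\Xcalcon(U) := \{\psi\phi : \phi\in\Xcalcon(U)\}$.

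Next I would show that $\psi\cdot\Xcalcon(U) = \Xcalcon(U)$. The inclusion $\psi\cdot\Xcalcon(U) \subset \Xcalcon(U)$ is immediate, since the product of two holomorphic functions is holomorphic. For the reverse inclusion, the hypothesis $\psi(z)\neq 0$ for all $z\in U$ makes $1/\psi$ a well-defined holomorphic function on $U$, so every $\eta \in \Xcalcon(U)$ can be written $\eta = \psi\cdot(\eta/\psi)$ with $\eta/\psi \in \Xcalcon(U)$. Combining,
\begin{equation*}
\PrCon(\bar\psi\xi)=0 \;\Longleftrightarrow\; \xi\perp \psi\cdot\Xcalcon(U) \;\Longleftrightarrow\; \xi\perp \Xcalcon(U) \;\Longleftrightarrow\; \PrCon(\xi)=0,
\end{equation*}
which is the claim.

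The only step that genuinely uses non-vanishing of $\psi$ is the surjectivity inclusion $\Xcalcon(U) \subset \psi\cdot\Xcalcon(U)$; without it, $1/\psi$ need not be holomorphic and the direction $\PrCon(\bar\psi\xi)=0 \Rightarrow \PrCon(\xi)=0$ would fail. The one mild technical point to watch is regularity at the boundary: if one interprets $\Xcalcon(U)$ as holomorphic functions smooth up to $\partial U$, then the non-vanishing should be read on $\overline{U}$ so that $1/\psi$ remains smooth up to the boundary, which is automatic since $\overline{U}$ is compact. I do not expect any real obstacle beyond this bookkeeping.
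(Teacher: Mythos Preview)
Your proof is correct and follows essentially the same argument as the paper: both reduce $\PrCon=0$ to $L^2$-orthogonality against all of $\Xcalcon(U)$ and then use that multiplication by the non-vanishing holomorphic $\psi$ is a bijection of $\Xcalcon(U)$. The only cosmetic difference is that you phrase the computation via the complex inner product and the equality $\psi\cdot\Xcalcon(U)=\Xcalcon(U)$, whereas the paper writes the equivalent identity $\pair{\xi,\eta}=\pair{\bar\psi\xi,\eta/\psi}$ directly with the real pairing.
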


\begin{proof}
	We have
	\begin{align*}
		\PrCon(\xi)=0 & \iff \pair{ \xi,\eta}=0 & \forall \eta\in\Xcalcon(U) \\
		&\iff \pair{\bar \psi \xi,\eta/\psi} = 0 & \forall \eta\in\Xcalcon(U) \\
		&\iff \pair{\bar \psi \xi,\rho} = 0 & \forall \rho\in\Xcalcon(U) \\
		&\iff \PrCon(\bar \psi \xi) = 0. &
	\end{align*}
\end{proof}

%



\subsection{Integration by parts} 
\label{sub:integration_by_parts}


Standard variational calculus makes frequent use of integration by parts in order to
``isolate'' a virtual variation from derivatives. Usually, the boundary term
appearing either vanishes (in the case of a space of tangential vector fields),
or it can be treated separately giving rise to natural boundary conditions
(in the case of a space 
where vector fields can have arbitrary small compact support). 
However, in the case
of conformal vector fields, there is always a global dependence between
interior points, and points on the boundary (due to the Cauchy--Riemann equations).
Hence, in the conformal case, we need an appropriate analogue of
integration by parts which avoids boundary integrals.
Formally, we may proceed as follows. Let $\pd_{z}:\Xcalcon(U)\to\Xcalcon(U)$
be the complex derivative. Then we are looking for the adjoint of this
operator with respect to the $L^{2}$~inner product. That is, an
operator $\pd^{\trans}_{z}:\Xcalcon(U)\to\Xcalcon(U)$ such that
\begin{equation*}
	\pair{\xi,\pd_{z}\eta}_{U} = \pair{\pd^{\trans}_{z}\xi,\eta}_{U}, \qquad \forall\; \xi,\eta\in\Xcalcon(U).
\end{equation*}
%
%

Explicitly, it is most easily done on the disk.
Indeed, for $\xi, \eta\in\Xcalcon(\D)$, denoting $\pd_{z}\xi = \xi_{z}$, we have
\begin{equation*}
	 \pair{ \xi,\eta_z}_{\D} = \pair{(z^2\xi)_z,\eta}_\D,
\end{equation*}
as can be seen from the following.
Every element of $\Xcalcon(\D)$ has a convergent Taylor series, and the monomials $z^n$ form a basis for $\Xcalcon(\D)$ that is orthogonal with respect to both the real and the complex $L^2$~inner product, for
\begin{equation*}
\eqalign{
\inner{ z^m,z^n}_{\D} &= \int_{\D} \xi \bar \eta \ud A \cr
&=  \int_0^1 \left( r \ud r \int_0^{2\pi} \d\theta\, r^m \e^{\i m \theta} r^n \e^{-\i n\theta}\right) \cr
&= \frac{2\pi}{m+n+2}\delta_{m,n} \cr
}
\end{equation*}
and $\pair{\xi,\eta}_{\D} = \real\,\inner{ \xi,\eta}_{\D}$.
Therefore, expanding $\xi = \sum_{n=0}^\infty \xi_n z^n$ and $\eta=\sum_{m=0}^\infty \eta_m z^m$,
\begin{equation*}
\eqalign{
	\inner{ \xi,\eta_z}_\D &= \inner{\sum_{n=0}^\infty \xi_n z^n,(\sum_{m=0}^\infty \eta_m z^m)_z}_\D
	\cr &
	= \sum_{n,m=0}^\infty \xi_n \bar \eta_m \inner{ z^n, m z^{m-1}}_\D 
	\cr &
	= \sum_{n,m=0}^\infty \xi_n \bar \eta_m \frac{2\pi m}{n+m+1}\delta_{n,m-1} 
	\cr &
	= \sum_{n=0}^\infty \pi  \xi_n \bar \eta_{n+1}
}
\end{equation*}
while
\begin{equation*}
\eqalign{
\inner{ (z^2 \xi)_z, \eta}_\D &= \sum_{n,m=0}^\infty \xi_n \bar \eta_m \inner{ (n+2)z^{n+1},z^m}_\D \cr
&=  \sum_{n,m=0}^\infty \xi_n \bar \eta_m (n+2)\frac{2\pi}{n+m+3}\delta_{n+1,m} \cr
&= \sum_{n=0}^\infty \pi \xi_n \bar \eta_{n+1}.
}
\end{equation*}
Since the two complex inner products are equal, their real parts are equal, establishing the proposition.
On domains other than the unit disk, the formula for $\pd_{z}^{\trans}$ 
is not as simple. However, it can be computed if the domain
is the image of the unit disk under a known conformal embedding $\varphi:\disk\to U$.
(Due to the Riemann mapping theorem, such an embedding always exists
and is unique up composition from the left with the three dimensional submanifold of disk preserving 
Möbius transformations.)
For $\xi,\eta\in\Xcalcon(\varphi(\D))$, there are holomorphic functions 
\begin{equation*}
	 \chi_1 := (z^2 \varphi_z)_z\circ\varphi^{-1}, \quad \chi_2 := (z^2 \varphi_z^2)\circ\varphi^{-1}
\end{equation*}
on $\varphi(\D)$ such that
\begin{equation*}
	 \pair{ \xi, \eta_z}_{\varphi(\D)} = \pair{ |\varphi_z|^{-2}(\chi_1 \xi + \chi_2 \xi_z, \eta}.
\end{equation*}
Indeed, we have
\begin{equation*}
\eqalign{  \pair{ \xi, \eta_z}_{\varphi(\D)} &= 
\pair{ \varphi_z \xi\circ \varphi, \varphi_z \eta_z\circ\varphi}_\D \cr
&= \pair{ \varphi_z \xi\circ \varphi, (\eta\circ\varphi)_z}_\D \cr
&= \pair{ (z^2 \varphi_z \xi\circ\varphi)_z, \eta\circ\varphi}_\D \cr
&= \pair{(\varphi_z\circ\varphi^{-1})^{-1}(z^2 \varphi_z \xi\circ\varphi)_z)\circ \varphi^{-1}, (\varphi_z\circ\varphi^{-1})^{-1} \eta}_{\varphi(\D)} \cr
&= \pair{(|\varphi_z\circ\varphi^{-1}|^{-2}(z^2 \varphi_z \xi\circ\varphi)_z)\circ \varphi^{-1}, \eta}_{\varphi(\D)} \cr
&= \pair{|\varphi_z\circ\varphi^{-1}|^{-2}(\chi_1 \xi + \chi_2 \xi_z), \eta}_{\varphi(\D)}.
}
\end{equation*}
Thus, we have $\pd^{\trans}_{z} \xi = \PrCon\big( |\varphi_z\circ\varphi^{-1}|^{-2}(\chi_1 \xi + 
\chi_2 \xi_z) \big)$,
where $\PrCon:\Xcal(\varphi(\disk))\to\Xcalcon(\varphi(\disk))$ is the $L^{2}$~orthogonal
projection onto conformal vector fields.
Notice that $\pd^{\trans}_{z}$ depends on the domain~$U$, and is thus non-local.
The Hodge decomposition for conformal vector fields, developed in the previous section,
together with this formula now allow the calculation of the
equations of motion for any Lagrangian density on $\Xcalcon(\varphi(\D))$. 




\subsection{Example (Conformal stationary problem)} 
\label{sub:example_1_complex_time_flow}


Let $V\in\Fcal(\R^{2})$ and consider the Lagrangian 
density $\mathcal L(\xi,\xi_z) = \frac{1}{2}\abs{\xi_z}^2+V(\xi)$. 
Let $S:\Xcalcon(U)\to\R$ be the corresponding action $S(\xi) = \int_U \mathcal L(\xi(z),\xi_z(z))\ud A(z)$
and consider the variational problem:
\begin{equation*}
	\text{Find $\xi\in\Xcalcon(U)$ such that $\frac{\delta S}{\delta\xi}(\xi)\cdot\eta = 0$
	for all variations $\eta\in\Xcalcon(U)$.}
\end{equation*}
Direct calculations yield
\begin{equation*}
\eqalign{
\frac{\delta S}{\delta\xi}(\xi)\cdot\eta &= 
\pair{\xi_z,\eta_z}_{U}+ \pair{ \grad(V)\circ\xi,\eta}_{U} \cr
&= \pair{\pd^{\trans}_{z}\xi_z,\eta}_{U} + \pair{ \grad(V)\circ\xi,\eta}_{U} \cr
&= \pair{\pd^{\trans}_{z}\xi_z+\grad(V)\circ\xi,\eta}_{U}.
}
\end{equation*}
We require this to vanish for all $\eta\in\Xcalcon(\D)$. That is, the 
first term in the inner product must be orthogonal to all conformal vector fields, i.e.,
\begin{equation*}
	 \PrCon\left(\pd^{\trans}_{z}\xi_z+\grad(V)\circ\xi\right) = 0.
\end{equation*}
Since $\pd^{\trans}_{z}\xi_z$ is already holomorphic we get
\begin{equation*}
	 \pd^{\trans}_{z}\xi_z+\PrCon\big(\grad(V)\circ\xi\big) = 0.
\end{equation*}
Now, using the orthogonal decomposition of conformal vector fields, derived 
in Section~\ref{sub:conformal_vector_fields} above, we introduce 
Lagrange multipliers $F,G\in\Fcal_{0}(U)$ for the constraints, giving the
differential equation
\begin{equation*}
	\begin{split}
		\pd^{\trans}_{z}\xi_z+\grad(V)\circ\varphi &= 
			\overline{\grad}(F) + \overline{\sgrad}(G)
		\\
		\frac{\pd\xi}{\pd\bar z} &= 0 
		\\
		F|_{\pd U} &= G|_{\pd U} = 0
	\end{split}
\end{equation*}
where $\frac{\pd\xi}{\pd\bar z} = 0$ is short way to write the Cauchy-Riemann equations.
Notice that it is not certain that the original
variational problem is well-posed. That requires
additional assumptions on the function~$V$.

\subsection{Example (Conformal wave equation)} 
\label{sub:example_2}


Adding time to the previous example, and denoting $\frac{\dd }{\dd t}\xi = \xi_{t}$,
we consider the Lagrangian density
corresponding to a nonlinear conformal wave equation
\begin{equation*}
	\mathcal{L}(\xi,\xi_z,\xi_t) = \frac{1}{2}\abs{\xi_t}^2 - 
		\frac{1}{2}\abs{\xi_z}^2-V(\xi) .
\end{equation*}
Requiring that the action $S(\xi) = \int_{0}^{1}\int_U \mathcal L(\xi(z,t),\xi_z(z,t))\ud A(z) \ud t$ 
be stationary on paths in $\Xcalcon(U)$ fixed at the initial and final times yields
\begin{equation*}
	 \pair{-\xi_{tt} - \pd^{\trans}_{z}\xi_z-\grad(V)\circ\xi,\eta}_{U} = 0
\end{equation*}
and so the equations of motion are
\begin{equation*} 
	 \xi_{tt} + \pd^{\trans}_{z}\xi_z + \PrCon\big(\grad(V)\circ\xi\big) = 0,
\end{equation*}
or, spelled out explicitly using Lagrange multipliers, we get
the differential equation
\begin{equation}\label{eq:conformal_wave_eq_2}
	\begin{split}
		 \xi_{tt} + \pd^{\trans}_{z}\xi_z + \grad(V)\circ\xi &= 
		\overline{\grad}(F) + \overline{\sgrad}(G),
		\\
		\frac{\pd \xi}{\pd \bar z} &= 0 \\ 
		F|_{\pd U} &= G|_{\pd U} = 0 .
	\end{split}
\end{equation}


Consider now the case $U=\D$ and 
$V(z) = c|z|^2/2$ for a constant $c\in\R$. This gives the 
partial differential equation

\begin{equation*}
 \xi_{tt} + (z^2 \xi_z)_z + c \xi = 0 .
\end{equation*}
Expanding~$\xi$ in the monomial 
basis~$\xi = \sum_{m=0}^\infty \xi_m z^m$, we get
\begin{equation*}
	 (\xi_m)_{tt} + (m^2+m+c) \xi_m = 0,
\end{equation*}
i.e., a set of uncoupled harmonic oscillators. In particular,
there are an infinite number of first integrals, given by
\begin{equation*}
	I_{m}(\xi,\dot\xi) = \frac{1}{2}\abs{\dot\xi_{m}}^{2} +
		\frac{1}{2}(m^{2}+m+c)\abs{\xi_{m}}^{2}.
\end{equation*}



\subsection{Example (Geodesic conformal flow equation)} 
\label{sub:example_3}

Let $\Emb(\D,\R^2)$ denote the set of embeddings $\D\to\R^2$.
This set has the structure of a Fréchet-Lie manifold.
Although this manifold is not a group, it has many
similarities with the diffeomorphism group $\Diff(\D)$. 
First of all, we notice that $\Diff(\D)$ is a submanifold of $\Emb(\D,\R^2)$.
Secondly, it holds that the tangent space at the identity is equal
to the set of all vector fields on $\D$, i.e., $T_\Id\Emb(\D,\R^2) = \Xcal(\D)$,
which, as reviewed earlier, carries the structure of a Fréchet-Lie algebra
with the vector field commutator.
(Recall that the subalgebra $\Xcal_\mathsf{t}(\D)$ of tangential vector fields
is the tangent space at the identity of~$\Diff(\D)$.)

A Riemannian metric on $\Emb(\D,\R^2)$ is given by
\begin{equation}\label{eq:metric_on_emb}
	T_\varphi\Emb(\D,\R^2)\times T_\varphi\Emb(\D,\R^2)
	\ni (u,v) \longmapsto \pair{u\circ\varphi^{-1},v\circ\varphi^{-1}}_{\varphi(\D)} \in \R .
\end{equation}
Notice that this metric is invariant under the group $\Diff(\D)$ acting on $T\Emb(\D,\R^2)$ by
composition from the right.

Our aim is to derive the geodesic equation with respect to the metric~\eqref{eq:metric_on_emb}
restricted to the submanifold of conformal embeddings
\[
\Con(\D,\R^2) = \{ \varphi \in \Emb(\D,\R^2); \varphi^*\mathsf{g} = F\mathsf{g} \},
\]
where $\mathsf{g}$ is the Euclidean metric on $\R^2$.
By right translation, the tangent space at $\varphi\in\Con(\D,\R^2)$
can be identified with a conformal vector field over the domain $\varphi(\D)$.
Indeed, we have the isomorphism 
\begin{equation*}
	T\Con(\D,\R^2)\ni (\varphi,\dot\varphi) \mapsto (\varphi,\underbrace{\dot\varphi\circ\varphi^{-1}}_{\xi})
	\in\Con(\D,\R^2)\times \Xcalcon(\varphi(\D)),
\end{equation*}
where~$\Con(\D,\R^2)\times \Xcalcon(\varphi(\D))$ should be thought of as a vector bundle
over $\Con(\D,\R^2)$.

\newcommand{\eps}{\varepsilon}

In the language of Lagrangian mechanics, we have the Lagrangian
\begin{equation*}
	L(\varphi,\dot\varphi) = \frac{1}{2}\pair{ \dot\phi\circ\phi^{-1},\dot\phi\circ\phi^{-1}}_{\phi(\D)}.
\end{equation*}
We would like to derive the Euler-Lagrange equations, but using the variables $(\varphi,\xi)$
instead of $(\varphi,\dot\varphi)$. In doing so, we first notice that if $\varphi_\eps$
is a variation of a curve $\varphi(t)$ and $\xi_\eps = \dot\varphi_\eps\circ\varphi^{-1}_\eps$, then
\begin{equation*}
	\frac{\dd}{\dd\eps}\Big|_{\eps=0} \xi_\eps = \dot\eta + \LieD_\eta\xi ,
\end{equation*}
where $\varphi_\eps = \exp(\eps\eta)\circ\varphi$ with $\eta\in\Xcalcon(\varphi(\D))$,
see~\cite{GaMaRa2010preprint,MoPeMaMc2011_preprint}.
In addition, it holds that
\begin{equation*}
	\frac{\ud}{\ud \eps}\Big|_{\eps=0}\frac{1}{2}\pair{\xi,\xi}_{\varphi_\eps(\D)}
	= \pair{\LieD_\eta\xi+ \divv(\eta)\xi,\xi}_{\varphi(\D)} .
\end{equation*}
This equality follows by straightforward 
calculations and the fact that $\LieD_\xi\mathsf{g} = \divv(\xi)\mathsf{g}$
for any $\xi\in\Xcalcon(\varphi(\D))$, as derived in Section~\ref{sub:conformal_vector_fields}
above. 

Using these relations, the variational principle now yields
\begin{equation*}
	\begin{split}
		0 &= \frac{\dd}{\dd\eps}\Big|_{\eps=0} \int_0^1 L(\varphi_\eps,\dot\varphi_\eps) \ud t
			= \frac{\dd}{\dd\eps}\Big|_{\eps=0} \int_0^1 \frac{1}{2}\pair{\xi_\eps,\xi_\eps}_{\varphi_\eps(\D)}\ud t
		\\
		&= \int_0^1 \pair{\dot\eta + 2\LieD_\eta\xi  + \divv(\eta)\xi,\xi}_{\varphi(\D)}
		\\
		&=
		\int_{\varphi(\D)} \Big( 
			\mathsf{g}(\dot\eta,\xi)\vol + 
			\underbrace{\LieD_{\eta}}_{\ud\interior_{\eta}}\big(
				\mathsf{g}(\xi,\xi)\vol
			\big)
			- \mathsf{g}(\xi,\xi)\underbrace{\divv(\eta)\vol}_{\ud\interior_{\eta}\vol}
		\Big)
		\\
		&=
		\int_{\varphi(\D)} \Big( 
			\mathsf{g}(\dot\eta,\xi)\vol +
			\ud\big(
				\mathsf{g}(\xi,\xi)\interior_{\eta}\vol
			\big)
			-
			\ud\big(
				\mathsf{g}(\xi,\xi)\interior_{\eta}\vol
			\big)
			+
			\ud \mathsf{g}(\xi,\xi)\wedge\interior_{\eta}\vol
		\Big)
		\\
		&=
		\int_{\varphi(\D)} \Big( \mathsf{g}(\dot\eta,\xi)\vol
		+ \ud \mathsf{g}(\xi,\xi) \big)\wedge
		\interior_{\eta}\vol \Big)
		\\
		&= \pair{\dot\eta,\xi}_{\varphi(\D)} 
		+ \pair{\grad(\abs{\xi}^2),\eta}_{\varphi(\D)} .
	\end{split}
\end{equation*}
Next, since
\begin{equation*}
	\begin{split}
		\frac{\dd}{\dd t}\pair{\eta,\xi}_{\varphi(\D)} &= \pair{\dot\xi,\eta}_{\varphi(\D)} + \pair{\xi,\dot\eta}_{\varphi(\D)}
		+ \int_{\varphi(\D)}\LieD_\xi (\mathsf{g}(\xi,\eta)\vol) \\
		&= \pair{\dot\xi,\eta}_{\varphi(\D)} + \pair{\xi,\dot\eta}_{\varphi(\D)} + 
			\pair{\xi,\LieD_\xi\eta + 2\divv(\xi)\eta}_{\varphi(\D)}
	\end{split}
\end{equation*}
and since the variation $\eta$ vanish at the endpoints, we get
\begin{equation*}
	0 = \pair{\dot\xi + 2\divv(\xi)\xi - \grad(\abs{\xi}^2),\eta}_{\varphi(\D)}
	+ \pair{\xi,\LieD_\xi\eta}_{\varphi(\D)} .
\end{equation*}
Using now that $\LieD_\xi\eta = \xi'\eta - \eta'\xi = 2\xi'\eta - (\eta\xi)'$ we get
\begin{equation*}
	0 = \pair{\dot\xi + 2\divv(\xi)\xi - \grad(\abs{\xi}^2) + 2\xi\overline{\xi'} 
	- \overline{\xi}\pd_z^\trans\xi,\eta}_{\varphi(\D)}
\end{equation*}
Finally, from the relations $\divv(\xi) = 2\real(\xi')$ and
$\grad(\abs{\xi}^2) = 2\xi\overline{\xi'}$, and the
decomposition in Theorem~\ref{thm:conformal_decomposition},
we obtain the strong geodesic equation
\begin{equation}\label{eq:conforma_geodesic_equation_strong}
	\begin{split}
		\dot\xi + 2\divv(\xi)\xi - \overline{\xi}\pd_z^\trans\xi
		&= \overline{\grad}(F) + \overline{\sgrad}(G) \\
		\dot\varphi &= \xi\circ\varphi \\
		\frac{\pd\xi}{\pd\bar z} &= 0 \\
		F|_{\pd\varphi(\D)} &= G|_{\pd\varphi(\D)} = 0 .
	\end{split}
\end{equation}
Notice that the first equation contains the operator
$\pd_z^\trans$, which depends on the domain~$\varphi(\D)$.
Thus, the first equation for $\dot\xi$ depends on
the second equation for $\dot\varphi$. This is different from
``usual'' Euler equations, where the equation for the reduced variable~$\xi$
is independent of~$\varphi$. From a geometric mechanics point of view (cf.~\cite{MaRa1999}),
the reason for this coupling is that the symmetry group of the Lagrangian 
is smaller than the configuration space.

For further information of this geodesic equation, its application
in image registration, and a derivation using the more general class
of $H^1_\alpha$~metrics, see~\cite{MaMcMoPe2011,MoPeMaMc2011_preprint}.

\bibliographystyle{model1-num-names}
\bibliography{../Papers/References}


\end{document}